\newcommand{\hookuparrow}{\mathrel{\rotatebox[origin=c]{90}{$\hookrightarrow$}}}
\newcommand{\Z}{\mathbb{Z}}
\newcommand{\bq }{\begin{equation}}
\newcommand{\eq }{\end{equation}}
\theoremstyle{plain}
\newtheorem{thm}{Theorem}
\newtheorem{lem}[thm]{Lemma}
\newtheorem{prop}[thm]{Proposition}
\newtheorem{cor}[thm]{Corollary}
\newtheorem{rem}[thm]{Remark}
\theoremstyle{definition}
\theoremstyle{example}
\title{Automorphism groups of non-singular plane curves of degree 5}
\author[E. Badr] {Eslam Badr}
\address{$\bullet$\,\,Eslam Badr}
\address{Departament Matem\`atiques, Edif. C, Universitat Aut\`onoma de Barcelona\\
08193 Bellaterra, Catalonia}
\email{eslam@mat.uab.cat}
\address{Department of Mathematics,
Faculty of Science, Cairo University, Giza-Egypt}
\email{eslam@sci.cu.edu.eg}
\author[F. Bars] {Francesc Bars}
\address{$\bullet$\,\,Francesc Bars}
\address{Departament Matem\`atiques, Edif. C, Universitat Aut\`onoma de Barcelona\\
08193 Bellaterra, Catalonia} \email{francesc@mat.uab.cat}
\thanks{E. Badr and F. Bars are supported by MTM2013-40680-P}
\keywords{plane curves; automorphism groups}
\subjclass[2010]{14H37, 14H50, 14H45}
\begin{document}

\maketitle
\begin{abstract}
Let $M_g$ be the moduli space of smooth, genus $g$ curves over an
algebraically closed field $K$ of zero characteristic. Denote by
${M_g(G)}$ the subset of $M_g$ of curves $\delta$ such that $G$ (as
a finite non-trivial group) is isomorphic to a subgroup of
$Aut(\delta)$, and let $\widetilde{M_g(G)}$ be the subset of curves
$\delta$ such that $G\cong Aut(\delta)$, where $Aut(\delta)$ is the
full automorphism group of $\delta$. Now, for an integer $d\geq 4$,
let $M_g^{Pl}$ be the subset of $M_g$ representing smooth, genus
$g$, plane curves of degree $d$ (in this case, $g=(d-1)(d-2)/2$) and
consider the sets $M_g^{Pl}(G):=M_g^{Pl}\cap M_g(G)$ and
$\widetilde{M_g^{Pl}(G)}:=\widetilde{M_g(G)}\cap M_g^{Pl}$.

Henn in \cite{He} and Komiya-Kuribayashi in \cite{KuKo2}, listed the
groups $G$ for which $\widetilde{M_3^{Pl}(G)}$ is non-empty. In this
paper, we determine the loci $\widetilde{M_6^{Pl}(G)}$,
corresponding to non-singular degree $5$ projective plane curves,
which are non-empty. Also, we present the analogy of Henn's results
for quartic curves concerning non-singular plane model equations
associated to these loci (see Table 2 for more details). Similar
arguments can be applied to deal with higher degrees.
\end{abstract}

\section{Introduction}

It is classically known from Hurwitz \cite{Hur} that, given any
non-trivial finite group $G$, one can construct a Riemann surface
$X$ whose automorphism group $Aut(X)$ is isomorphic to $G$.

A natural question is to list the groups such that the associated
Riemann surface will have a non-singular plane model. Harui in
\cite{Harui} determined the list of the finite groups $G$ that could
appear in such case. However, for a complete answer to the problem,
it remains to introduce the exact list of such groups which might
appear for a fixed degree and conversely, for an arbitrary but fixed
group in the list, one need to determine the degrees for which such a
group occurs. Therefore, there are the following two open problems:
\begin{enumerate}
\item Fixing a group
$G$, for which degrees $d\geq 4$, $\widetilde{M_g^{Pl}(G)}$ is a non-empty set? For example, by the
work of Crass in \cite{Crass}, we know that
$\widetilde{M_g^{Pl}(A_6)}$ is non-empty exactly for $g=10$,
$g=55$ and $g=406$, where $A_6$ is the alternating group of 6 letters. 

\item Once the degree $d$ is fixed, determine the groups $G$
(up to isomorphism) where $\widetilde{M_g^{Pl}(G)}$ is non-empty.

\end{enumerate}

This note is concerned with the second question. Henn in \cite{He}
and Komiya-Kuribayashi in \cite{KuKo2} solved the question for
$d=4$.
\par Recall that any $\delta\in M_g^{Pl}(G)$ corresponds to a
set of non-singular plane models $C_{\delta}$ in $\mathbb{P}^2(K)$
such that any two of them are related through a change of variables
$P\in PGL_3(K)$ (where $PGL_n(K)$ is the classical projective linear
group of $n\times n$ invertible matrices over $K$), and their
automorphism groups are conjugate. By $C$ we mean a plane
non-singular model associated to $\delta$. Observe that $Aut(C)$ is
a subgroup of $PGL_3(K)$ which is isomorphic to $G$ by an injective
representation $\rho:G\hookrightarrow PGL_3(K)$, that is
$Aut(C)=\rho(G)$ for some $\rho$.
\par We denote by $\rho(M_g^{Pl}(G))$ the set of all elements $\delta\in
M_g^{Pl}(G)$ such that $G$ acts on a plane model associated to
$\delta$ as $P^{-1}\rho(G)P$ for some $P$. This gives us the
following union decomposition:
$$M_g^{Pl}(G)=\cup_{[\rho]\in A}\rho (M_g^{Pl}(G))$$
where  $A:=\{\rho\,\,|\,\,\rho:G\hookrightarrow PGL_3(K)\}/\sim$
such that $\rho_a\sim\rho_b$ if and only if
$\rho_a(G)=P^{-1}\rho_b(G)P$ for some $P\in PGL_3(K)$. A similar
decomposition (which is now disjoint) follows for
$\widetilde{M_g^{Pl}(G)}$.

Henn in \cite{He} determined the $[\rho]'s$ and $G$ such that
$\widetilde{\rho(M_3^{Pl}(G))}$ is non-trivial, and associated to
such locus (once $\rho$ and $G$ are fixed) a certain projective
plane equation which depends on some parameters with respect to some
algebraic restrictions. More concretely, he
obtained a plane non-singular model of any element of the locus through a
certain specialization of the values of the parameters and vice
versa.

\par In this paper, we obtain the analogy of the previous Henn's results for the loci
$\widetilde{\rho(M_6^{Pl}(G))}$, see Table 2 for a compact form of
the analogy, which is also the main result of the paper.

\noindent First, we classified in \cite{BaBacyc}, for an arbitrary
but a fixed degree $d$, the $\rho$'s and the cyclic groups $\Z/m\Z$
of order $m$ such that $\rho(M_g^{Pl}(\Z/m\Z))$ is not empty. In
particular, $m$ should divide one of the integers $$d,d-1,
d(d-1),(d-1)^2,d(d-2)\,\,\textit{or}\,\,d^2-3d+3.$$ Furthermore, we
characterized the locus $\widetilde{M_6^{Pl}(G)}$ whenever $G$ has
an element of order $m$ with $m$ large enough. By large enough, we
mean to be one of the following integers:
$d(d-1),(d-1)^2,d(d-2),d^2-3d+3,\ell d$ ($\ell\geq 3$) or
$\ell(d-1)$ ($\ell\geq 2$). Lastly, it remains to treat case by case
the groups $G$ that appeared in Harui's list \cite{Harui} in order
to investigate which of them must be left when the locus
$\widetilde{M_6^{Pl}(G)}$ is non-trivial and $G$ has no elements of
(large enough) order $m$.

\noindent We thank a referee for his or her comments and suggestions
that improved the paper in its present form.

\section{Cyclic subgroups for degree 5 non-singular plane curve}
Consider $\delta\in M_{6}^{Pl}$ such that the group $G\cong
Aut(\delta)$ is non-trivial. Let $C:\,\,F(X;Y;Z)=0$ in
$\mathbb{P}^2(K)$ be a non-singular plane model of degree 5 over an
algebraic closed field $K$ of characteristic zero, where
$Aut(C)=\rho(G)\leq PGL_3(K)$ for some $\rho:G\hookrightarrow
PGL_3(K)$ (any other model $C$ of $\delta$ is given by
$C_{P}:\,F(P(X;Y;Z))=0$ with $Aut(C_{P})=P^{-1}Aut(C)P$ for some
$P\in PGL_3(K)$, and we say that $C_{P}$ is $K$-equivalent or
$K$-isomorphic to $C$).
Assume that $\sigma\in Aut(C)$ is an element of order $m$ hence by a
change of variables in $\mathbb{P}^2$ (in particular, changing the
plane model to a $K$-equivalent one associated to $\delta$), we can
consider $\sigma$ as the automorphism $(x:y:z)\mapsto (x:\xi_m^a
y:\xi_m^b z)$ where $\xi_m$ is a primitive $m$-th root of unity in
$K$, and $a,b$ are integers such that $0\leq a<b\leq m-1$. Moreover,
if $ab\neq 0$ then $m$ and $gcd(a,b)$ are coprime (we can reduce to
$gcd(a,b)=1$) and if $a=0$ then $gcd(b,m)=1$. Also, such an
automorphism is identified with type $m,(a,b)$.
\par Then, by a change of variables, we may have one of the following
situations (see \cite{BaBacyc} for more details, in which we follow
the same line of argument as Dolgachev for degree $4$ in \cite{Dol},
but for a general degree $d\geq4$).

\begin{center}
\begin{table}[!th]
  \renewcommand{\arraystretch}{1.3}
  \caption{Quintics\,\,\,}\label{table:Cyclic Auto.}
  \centering
\begin{tabular}{|c|c|}
  \hline
  Type: $m, (a,\,b)$ & $F(X;Y;Z)$ \\\hline\hline
  $20,(4,5)$& $X^5+Y^5+XZ^4$ \\\hline
  $16,(1,12)$& $X^5+Y^4Z+XZ^4$\\\hline
  $15,(1,11)$& $X^5+Y^4Z+YZ^4$    \\\hline
$13,(1,10)$& $X^4Y+Y^4Z+Z^4X$    \\\hline
 $10,(2,5)$& $X^5+Y^5+\alpha XZ^4+\beta_{2,0}X^3Z^2$ \\\hline
  $8,(1,4)$& $X^5+Y^4Z+\alpha XZ^4+\beta_{2,0}X^3Z^2$ \\\hline
 $5,(1,2)$& $X^5+Y^5+Z^5+\beta_{3,1}X^2YZ^2+\beta_{4,3}XY^3Z$    \\\hline
  $5,(0,1)$& $Z^5+L_{5,Z}$    \\\hline

\end{tabular}
\end{table}
\end{center}

\begin{center}
\begin{tabular}{|c|c|}
\hline
  $4,(1,3)$& $X^5+X\big(Z^4+\alpha Y^4+\beta_{4,2}Y^2Z^2\big)+\beta_{2,1}X^3YZ$\\\hline
 $4,(1,2)$& $X^5+X\big(Z^4+\alpha Y^4\big)+\beta_{2,0}X^3Z^2+\beta_{3,2}X^2Y^2Z+\beta_{5,2}Y^2Z^3$\\\hline
 $4,(0,1)$& $Z^4L_{1,Z}+L_{5,Z}$    \\\hline
$3,(1,2)$& $X^5+Y^4Z+\alpha
YZ^4+\beta_{2,1}X^3YZ+X^2\big(\beta_{3,0}Z^3+\beta_{3,3}Y^3\big)+\beta_{4,2}XY^2Z^2$
\\\hline
 $2,(0,1)$& $Z^4L_{1,Z}+Z^2L_{3,Z}+L_{5,Z}$    \\\hline

  \end{tabular}
\end{center}

%
%
%
Here $L_{i,*}$ means a homogenous polynomial of degree $i$ in the
variables $\{X,Y,Z\}$ such that the variable $*$ does not appear.
Also, $\alpha, \beta_{i,j}\in K$ and $\alpha$ is always non-zero and
it can be transformed by a diagonal change of variables $P$ to 1.
\begin{rem} It is to be noted that the above table lists all the possible situations for which $\rho(M_6(\Z/m\Z))$ is not empty where
$P\rho(\Z/m\Z)P^{-1}=<(diag(1,\xi_m^a,\xi_m^b)>$ for some $P\in
PGL_3(K)$ and $\rho(M_6(\Z/m\Z))$ corresponds to Type $m,(a,b)$.
\end{rem}

\section{General properties of the full automorphism group}
Before a detailed study of the automorphism groups for degree 5, we
recall the following results concerning $Aut(\delta)$ for $\delta\in
M_g^{Pl}$, which will be useful throughout this paper. In some cases
we will use the notation of the GAP library for finite small groups
to indicate some of them.
\par Because linear systems $g^2_d$ are
unique (up to multiplication by $P\in PGL_3(K)$ in $\mathbb{P}^2(K)$
\cite[Lemma 11.28]{Book}), we always take $C$ as a plane non-singular
model of $\delta$, which is given by a projective plane equation
$F(X;Y;Z)=0$ and $Aut(C)$ as a finite subgroup of $PGL_3(K)$ that
fixes the equation $F$ and is isomorphic to $Aut(\delta)$. Any other
plane model of $\delta$ is given by $C_{P}:\,F(P(X;Y;Z))=0$ with
$Aut(C_{P})=P^{-1}Aut(C)P$ for some $P\in PGL_3(K)$ and $C_{P}$ is
$K$-equivalent or $K$-isomorphic to $C$. By an abuse of notation, we
also denote a non-singular projective plane curve of degree $d$ by
$C$. Therefore, $Aut(C)$ satisfies one of the following situations
(see Mitchell \cite{Mit} for more details):
\begin{enumerate}
\item fixes a point $Q$ and a line $L$ with $Q\notin L$ in
$PGL_3(K)$,
\item fixes a triangle (i.e. a set of three non-concurrent lines),
\item $Aut(C)$ is conjugate to a representation inside $PGL_3(K)$ of
one of the finite primitive groups namely, the Klein group
$PSL(2,7)$, the icosahedral group $A_5$, the alternating group
$A_6$, the Hessian group $Hess_{216}$ or to one of its subgroups $Hess_{72}$ or $Hess_{36}$.
\end{enumerate}

It is classically known that if a subgroup $H$ of automorphisms of a
non-singular plane curve $C$ fixes a point on $C$ then $H$ is cyclic
\cite[Lemma 11.44]{Book}, and recently Harui in \cite[\S2]{Harui}
provided the lacked result in the literature on the type of groups
that could appear for non-singular plane curves. Before introducing
Harui's statement, we need to define the terminology of being a
descendent of a plane curve: For a non-zero monomial $cX^iY^jZ^k$
with $c\in K\setminus\{0\}$, we define its exponent as
$max\{i,j,k\}$. For a homogenous polynomial $F$, the core of $F$ is
defined to be the sum of all terms of $F$ with the greatest
exponent. Let $C_0$ be a smooth plane curve, a pair $(C,H)$ with
$H\leq Aut(C)$ is said to be a descendant of $C_0$ if $C$ is defined
by a homogenous polynomial whose core is a defining polynomial of
$C_0$ and $H$ acts on $C_0$ under a suitable change of the
coordinate system (i.e. $H$ is conjugate to a subgroup of
$Aut(C_0)$).


There is a natural map $\varrho:PBD(2,1)\rightarrow PGL_2(K)$ given
by $[A']\mapsto [A'']$, where $[M]$ denotes the equivalence class of
the matrix $M$ in the projective linear group, where $A'$ is the
matrix given by a diagonal block matrices by entries $A''$ and
$\alpha\in K^*$.

\begin{thm}[Harui] \label{teoHarui} If
$H\preceq\,\,Aut(C)$, where $C$ is a non-singular plane curve of
degree $d\geq4$ then $H$ satisfies one of the following.
\begin{enumerate}
  \item $H$ fixes a point on $C$ and then it is cyclic.
  \item $H$ fixes a point not lying on $C$ and it satisfies a short exact sequence of the form
  $$1\rightarrow N\rightarrow H\rightarrow G'\rightarrow 1,$$
where $N$ is a cyclic group of order dividing $d$ and $G'$ (which is a
subgroup of $PGL_2(K)$) is conjugate to a cyclic group $\Z/m\Z$ of
order $m$ with $m\leq d-1$, a Dihedral group $D_{2m}$ of order $2m$
with $|N|=1$ or $m|(d-2)$, the alternating groups $A_4$, $A_5$ or
the symmetry group $S_4$. Moreover, we have the following
commutative diagram with exact rows and vertical injective
morphisms:
$$\begin{array}{ccccccccc}
1&\rightarrow&K^*&\rightarrow&
PBD(2,1)&\rightarrow^{\varrho}&PGL_2(K)&\rightarrow&1\\
&&\hookuparrow&&\hookuparrow&&\hookuparrow\\
1&\rightarrow& N&\rightarrow&H&\rightarrow&G'&\rightarrow&1 .\\
\end{array}$$

\item $H$ is conjugate to a subgroup of $Aut(F_d)$ where $F_d$ is the Fermat curve $X^d+Y^d+Z^d$.
In particular, $|H|\,|\,6d^2$ and $(C,H)$ is a descendant of $F_d$.
\item $H$ is conjugate to a subgroup of $Aut(K_d)$ where $K_d$ is the Klein curve curve $XY^{d-1}+YZ^{d-1}+ZX^{d-1}$
hence $|H|\,|\,3(d^2-3d+3)$ and $(C,H)$ is a descendant of $K_d$.
\item $H$ is conjugate to a finite primitive subgroup of $PGL_3(K)$ which are mentioned above.

\end{enumerate}
\end{thm}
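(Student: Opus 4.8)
The plan is to run Mitchell's classification of finite subgroups of $PGL_3(K)$ directly on $H$ and to match each of its three outputs with one of the five conclusions, reading off the numerical bounds from the smoothness of $C$. So let $H\preceq Aut(C)$. If $H$ fixes a point of $C$, then by \cite[Lemma 11.44]{Book} it is cyclic and conclusion (1) holds; \emph{assume henceforth that $H$ fixes no point of $C$}. Applying Mitchell's theorem (to $H$ itself), $H$ then falls into one of: (a) $H$ fixes a point $Q$ and a line $L$ with $Q\notin L$ --- and then necessarily $Q\notin C$; (b) $H$ fixes a triangle; (c) $H$ is conjugate to one of $PSL(2,7),A_5,A_6,Hess_{216},Hess_{72},Hess_{36}$. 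Case (c) is conclusion (5), so only (a) and (b) require work.

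In case (a), choose coordinates so that $Q=[0:0:1]$ and $L=\{Z=0\}$, so $H\leq PBD(2,1)$. Set $N:=H\cap K^{*}=\ker(\varrho|_{H})$ (the homologies in $H$ with center $Q$ and axis $L$) and $G':=\varrho(H)\leq PGL_2(K)$; restricting $1\to K^{*}\to PBD(2,1)\xrightarrow{\varrho}PGL_2(K)\to 1$ to $H$ yields simultaneously the exact sequence $1\to N\to H\to G'\to 1$ and the displayed commutative diagram. As a finite subgroup of $K^{*}$, $N$ is cyclic; writing $F=\sum_{i=0}^{d}G_{d-i}(X,Y)Z^{i}$, one has $G_d\neq 0$ (since $C$, being irreducible of degree $\geq 2$, does not contain $L$) and $G_0=F(0,0,1)\neq 0$ (since $Q\notin C$), so matching coefficients in $F(X,Y,tZ)=cF(X,Y,Z)$ for a generator $(X:Y:Z)\mapsto(X:Y:tZ)$ of $N$ forces $c=1$ and $t^{d}=1$, whence $|N|\mid d$. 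Since $G'$ is finite, the classification of finite subgroups of $PGL_2(K)$ in characteristic zero gives that $G'$ is conjugate to $\Z/m\Z$, to $D_{2m}$, or to one of $A_4,A_5,S_4$. The remaining content of conclusion (2) is the bounds $m\leq d-1$ in the cyclic case and $|N|=1$ or $m\mid(d-2)$ in the dihedral case; a preimage $g\in H$ of a generator of the cyclic part of $G'$ fixes $Q$ and the two fixed points $P_1,P_2$ on $L$ of its image in $PGL_2(K)$, hence is diagonalizable, and one counts the degree-$d$ divisor $C\cap L$ on $L\cong\mathbb{P}^1$ --- which $g$ permutes with orbits of length $1$ (only possible at $P_1,P_2$) or $m$ --- while bounding $I_{P_1}(C,L)$ and $I_{P_2}(C,L)$ by smoothness of $C$, and similarly examines the action of $g$ and of the dihedral involution (which swaps $P_1,P_2$ and normalizes $N$) on the two lines joining $Q$ to $P_1,P_2$. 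I expect this intersection-multiplicity bookkeeping to be the main obstacle of the whole proof.

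In case (b), place the triangle at $XYZ=0$ with vertices $v_1=[1:0:0]$, $v_2=[0:1:0]$, $v_3=[0:0:1]$, so $H\leq T\rtimes S_3$ with $T$ the diagonal torus and $S_3$ permuting coordinates; in particular $H$ preserves the exponent of every monomial and so preserves $\mathrm{core}(F)$ up to a scalar. Since $C$ is $H$-invariant, $\{v_1,v_2,v_3\}\cap C$ is a union of $H$-orbits, and I would distinguish three cases according to its size. If it is empty, then $X^{d},Y^{d},Z^{d}$ all occur in $F$, and being the only exponent-$d$ monomials they constitute $\mathrm{core}(F)=aX^{d}+bY^{d}+cZ^{d}$ with $abc\neq 0$; rescaling the coordinates turns this into $X^{d}+Y^{d}+Z^{d}$, so $H$ is conjugate into $Aut(F_d)$, $(C,H)$ is a descendant of $F_d$, and $|H|\mid|Aut(F_d)|=6d^{2}$ --- conclusion (3). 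If exactly two vertices, say $v_1,v_2$, lie on $C$, then their $H$-orbit has length $2$ (neither being $H$-fixed, by our standing assumption), so $H$ fixes $v_3\notin C$ and preserves the line $\overline{v_1v_2}$, which misses $v_3$; thus $H$ is of type (a) and we are back in conclusion (2). If all three vertices lie on $C$, then $H$ acts on them without fixed points, hence transitively, so $H$ contains a $3$-cycle on the coordinates; now none of $X^{d},Y^{d},Z^{d}$ occurs, the largest exponent in $F$ is $d-1$, smoothness of $C$ at each vertex forces one of the two exponent-$(d-1)$ monomials through it to occur, and combining this with the $3$-cycle and with the diagonal part $T\cap H$ --- together with a further use of smoothness of $C$ to exclude genuinely mixed configurations --- one finds that $\mathrm{core}(F)$ is $PGL_3(K)$-equivalent to the Klein polynomial $XY^{d-1}+YZ^{d-1}+ZX^{d-1}$, so $H$ is conjugate into $Aut(K_d)$, $(C,H)$ is a descendant of $K_d$, and $|H|\mid 3(d^{2}-3d+3)$ --- conclusion (4). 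Besides the $PGL_2(K)$-bounds of case (a), the delicate point here is precisely that last normalization of $\mathrm{core}(F)$ when $T\cap H$ is small.
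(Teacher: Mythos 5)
This theorem is not proved in the paper at all: it is imported verbatim from Harui \cite{Harui}, so there is no internal proof to compare yours against, and I can only assess your outline on its own terms. Your global plan --- split off the case of a fixed point on $C$, then run Mitchell's trichotomy on $H$ and read each conclusion off the $H$-invariant equation --- is indeed the strategy of Harui's original argument, and several pieces are sound: conclusion (1), the construction of the exact sequence and the proof that $|N|$ divides $d$, the reduction of the ``two vertices on $C$'' subcase to the intransitive case, and the Fermat-core argument when no vertex lies on $C$.

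There is, however, a structural gap in your case (a), not just deferred bookkeeping. You claim that whenever $H$ fixes a point $Q\notin C$ and no point of $C$, the numerical constraints of conclusion (2) ($m\le d-1$ for cyclic $G'$; $|N|=1$ or $m\mid (d-2)$ for dihedral $G'$) follow from smoothness via intersection multiplicities on $C\cap L$. That implication is false. Take $C=F_d$ and $H\le Aut(F_d)$ generated by $diag(1,\xi_d,1)$, $diag(1,1,\xi_d)$ and $[X;Z;Y]$: this $H$ fixes $[1:0:0]\notin F_d$ and no point of $F_d$, yet $N\cong\Z/d\Z$ and $G'\cong D_{2d}$ with $d\nmid (d-2)$. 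Such an $H$ is rescued only by conclusion (3); so in the intransitive case the statement one must actually prove is the disjunction ``the bounds of (2) hold \emph{or} $(C,H)$ is a descendant of $F_d$ or $K_d$'', and identifying when the analysis escapes into the descendant cases is the real content --- it cannot emerge from bounding $I_{P_1}(C,L)$ and $I_{P_2}(C,L)$, which smoothness does not bound below $d$ anyway (a line can meet a smooth plane curve in a single point of multiplicity $d$, e.g.\ $Z=0$ on $Y^{d-1}Z+X^d+Z^d=0$). A milder version of the same issue sits in your triangle case: the exclusion of mixed cores such as $X^{d-1}(Y+Z)+Y^{d-1}(Z+X)+Z^{d-1}(X+Y)$, which you defer, is exactly where one must show that either the core is Klein or $H$ degenerates into the intransitive case; until these two points are supplied, the outline does not yet constitute a proof.
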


We have also the following statement \cite[Theorem 2.3]{Harui}.
\begin{thm} Given a non-singular plane curve $C$ of degree $d\neq4,6$, then $|Aut(C)|\leq 6 d^2.$
\end{thm}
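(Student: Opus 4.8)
The plan is to read the bound directly off Harui's structure theorem (Theorem~\ref{teoHarui}) applied to $H:=Aut(C)$, going through its five alternatives; throughout we assume $d\ge 4$ (otherwise $Aut(C)$ is infinite), so that the remaining cases are $d=5$ and $d\ge 7$. Alternatives $(3)$ and $(4)$ give the conclusion at once, since there $|H|$ divides $6d^2$, respectively $3(d^2-3d+3)$, and $3(d^2-3d+3)<6d^2$ for every $d\ge 1$. In alternative $(1)$, $H$ is cyclic, and by the classification of cyclic automorphism groups of smooth plane curves recalled in Section~2 (see \cite{BaBacyc}), $|H|$ divides one of $d,\,d-1,\,d(d-1),\,(d-1)^2,\,d(d-2),\,d^2-3d+3$, the largest of which is $d(d-1)<6d^2$.

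Next comes alternative $(2)$, where $H$ fits in an exact sequence $1\to N\to H\to G'\to 1$ with $|N|$ dividing $d$, so $|H|\le d\,|G'|$. If $G'$ is cyclic then $|G'|\le d-1$ and $|H|\le d(d-1)<6d^2$; if $G'$ is $A_4$ or $S_4$ then $|H|\le 24d\le 6d^2$ because $d\ge 4$; and if $G'=D_{2m}$ then either $m\mid d-2$, whence $|H|\le 2d(d-2)<6d^2$, or $N=1$, in which case the cyclic subgroup of order $m$ of $H\cong D_{2m}$ is a cyclic automorphism group of $C$, so $m\le d(d-1)$ by the bound just recalled and $|H|=2m<6d^2$. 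The one subcase needing more is $G'=A_5$, where the crude estimate $|H|\le 60d$ only improves on $6d^2$ for $d\ge 10$: writing $F=\sum_{i=0}^{d}Z^iF_{d-i}(X,Y)$ with $F_{d-i}$ a binary form of degree $d-i$, smoothness of $C$ forces $F_d\not\equiv 0$ (otherwise $Z\mid F$), so the zero set of $F_d$ on the line $Z=0$ is a non-empty $G'$-stable subset of $\mathbb{P}^1$ of cardinality at most $d$; since it must contain an $A_5$-orbit and the smallest orbit of $A_5$ on $\mathbb{P}^1$ has $12$ points, we get $d\ge 12$ and hence $|H|\le 60d<6d^2$.

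It remains to deal with alternative $(5)$, where $H$ is one of the finite primitive subgroups of $PGL_3(K)$, of order $36$, $60$, $72$, $168$, $216$ or $360$. The orders at most $72$ already satisfy $|H|\le 96\le 6d^2$ for $d\ge 4$, so only $PSL(2,7)$, $Hess_{216}$ and $A_6$ are left. For these I would invoke the classical invariant theory of the Klein, Hessian and Valentiner groups, whose rings of (relative) invariants for the linear lifts to $SL_3(K)$ are generated in degrees $\{4,6,14,21\}$, $\{6,9,12\}$ and $\{6,12,30,45\}$ respectively: a degree-$d$ plane curve stable under one of these groups is cut out by such a relative invariant, hence $d$ lies in the corresponding numerical semigroup. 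In particular neither $PSL(2,7)$ nor $Hess_{216}$ stabilizes a smooth plane quintic, and $A_6$ stabilizes a smooth plane curve of degree at most $7$ only when $d=6$; combining this with $6d^2\ge 294\ge 216$ for $d\ge 7$ and $6d^2\ge 384\ge 360$ for $d\ge 8$, one obtains $|H|\le 6d^2$ in every degree $d\ne 4,6$, which proves the theorem.

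The delicate part is clearly alternative $(5)$ in the small degrees $d=5$ and $d=7$: Theorem~\ref{teoHarui} by itself does not compare the fixed orders $168,\,216,\,360$ with $6d^2$, so one genuinely has to bring in the precise degrees of the basic invariants of the three primitive linear groups of order exceeding $96$. This is exactly the ingredient that also pins down the exceptional degrees in the statement, since for $d=4$ the Klein quartic realizes $PSL(2,7)$ (whence $|Aut(C)|=168>96$) and for $d=6$ the Valentiner--Wiman sextic realizes $A_6$ (whence $|Aut(C)|=360>216$); everything else reduces to the elementary numerical inequalities above and to the already available classification of cyclic automorphisms in degree $d$.
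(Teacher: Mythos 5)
The paper itself offers no proof of this theorem: it is imported verbatim from \cite[Theorem 2.3]{Harui}, so your argument cannot be compared against an internal one and has to be assessed on its own merits. On those terms it is essentially correct and follows the natural route, namely running $H=Aut(C)$ through the five alternatives of Theorem~\ref{teoHarui}. Cases (1), (3), (4) and most of (2) are disposed of by elementary numerical estimates exactly as you say, and your handling of the one genuinely problematic subcase of (2), $G'\cong A_5$ (where the crude bound $60d$ beats $6d^2$ only for $d\geq 10$), via the nonempty $G'$-stable set $C\cap\{Z=0\}\subset\mathbb{P}^1$ of at most $d$ points, which must contain an $A_5$-orbit of size at least $12$, is clean and correct; nonemptiness uses that a smooth plane curve of degree $\geq 2$ is irreducible and hence cannot contain the invariant line.

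The one step you should tighten is alternative (5). For the perfect groups $PSL(2,7)$ and $A_6$ your semigroup argument is sound: every relative invariant of the corresponding linear lift is an absolute invariant, the invariant rings are generated in degrees $\{4,6,14,21\}$ and $\{6,12,30,45\}$, and this correctly excludes $d=5,7$ while simultaneously explaining the exceptional degrees $d=4$ (Klein quartic) and $d=6$ (Wiman sextic). But $Hess_{216}$ has abelianization $\Z/3\Z$, so it admits relative invariants that need not lie in the subring generated by the absolute invariants, and ``$d$ lies in the numerical semigroup generated by $\{6,9,12\}$'' does not follow formally from the input you quote. You either need the classical fact (Maschke) that the smallest curve left invariant by $Hess_{216}$ has degree $6$, or, staying inside this paper, the observation that $Hess_{216}\cong(\Z/3\Z)^2\rtimes SL(2,3)$ contains an element of order $6$, whereas for $d=5$ the order of any automorphism of a smooth quintic divides one of $4,5,13,15,16,20$ by the divisibility result recalled in Section 3. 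Either repair closes the only degree, $d=5$, where $216>6d^2$; with it (and precise references for the classical invariant degrees), your proof is complete and, unlike the paper, self-contained.
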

In particular, for $d=5$ we conclude:
\begin{cor} Given $C$, a non-singular plane curve of degree 5, then
$Aut(C)$ is not conjugate to the Hessian group $Hess_{216}$, the
Klein group $PSL(2,7)$ or the alternating group $A_6$.
\end{cor}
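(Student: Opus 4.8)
The plan is to deduce the statement directly from the theorem just stated (namely $|Aut(C)|\le 6d^2$ for every non-singular plane curve of degree $d\neq 4,6$) by a pure cardinality comparison. First I would specialize that bound to the case at hand: since $5\neq 4,6$, any non-singular plane curve $C$ of degree $5$ satisfies
$$|Aut(C)|\ \le\ 6\cdot 5^2\ =\ 150 .$$
Next I would record the orders of the three primitive groups in question, all of which appear as subgroups of $PGL_3(K)$: the Hessian group has $|Hess_{216}|=216$, the Klein group has $|PSL(2,7)|=\tfrac12\cdot 7\cdot(7^2-1)=168$, and the alternating group has $|A_6|=360$. Each of these three integers is strictly larger than $150$.

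Finally, since conjugate subgroups of $PGL_3(K)$ necessarily have the same cardinality, $Aut(C)$ — being a subgroup of $PGL_3(K)$ of order at most $150$ — cannot lie in the conjugacy class of $Hess_{216}$, of $PSL(2,7)$, or of $A_6$. This is exactly the assertion of the corollary. I would close with a short remark pointing out that the remaining primitive groups from Mitchell's list are \emph{not} excluded by this argument: one has $|Hess_{72}|=72$, $|Hess_{36}|=36$ and $|A_5|=60$, all $\le 150$, so the order bound is vacuous for them and these cases must be settled by the case-by-case analysis carried out in the subsequent sections.

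I do not expect any genuine obstacle here; the only point that deserves a moment's care is the reading of ``conjugate to'': in the statement it refers to the specific finite subgroups $Hess_{216},\,PSL(2,7),\,A_6\le PGL_3(K)$ themselves, not to arbitrary subgroups of them, so comparing orders is legitimately sufficient and no finer structural input (e.g.\ about which subgroups of these groups can act on a quintic) is needed for this particular corollary.
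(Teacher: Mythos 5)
Your proof is correct and matches the paper's (implicit) argument exactly: the corollary is stated as an immediate consequence of Harui's bound $|Aut(C)|\le 6d^2=150$ for $d=5$, combined with the fact that $|Hess_{216}|=216$, $|PSL(2,7)|=168$ and $|A_6|=360$ all exceed $150$. Your closing remark about why $Hess_{72}$, $Hess_{36}$ and $A_5$ are not excluded is a accurate and useful observation, consistent with the case-by-case treatment in the later sections.
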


Moreover, we proved in \cite{BaBacyc} the following two results for
cyclic subgroups inside $Aut(C)$ (see \cite[Corollary 33]{BaBacyc}
and \cite[\S4]{BaBacyc} respectively).
\begin{prop} Let $C$ be a non-singular plane curve of degree $d$, and
let $\sigma\in Aut(C)$ be of order $m$. Then $m$ divides one of the
following integers: $d-1,\,\,d,\,\,(d-1)^2,\,\,d(d-2),\,\,d(d-1)$ or
$d^2-3d+3$.
\end{prop}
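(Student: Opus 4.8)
The plan is to make the situation completely explicit by diagonalising $\sigma$ and reading off constraints on the defining polynomial, in the spirit of Dolgachev's treatment of $d=4$. Since $\sigma$ has finite order it is diagonalisable, and, as recalled in Section~2, after a change of coordinates we may assume that $\sigma$ has type $m,(a,b)$, i.e. $\sigma(X:Y:Z)=(X:\xi_m^aY:\xi_m^bZ)$ with $0\le a<b\le m-1$, with $\gcd(a,b)=1$ when $ab\neq0$ and $\gcd(b,m)=1$ when $a=0$. Because $\sigma(C)=C$, the degree-$d$ form $F$ is a $\sigma$-eigenvector; equivalently, there is a residue $c$ modulo $m$ such that every monomial $X^iY^jZ^k$ appearing in $F$ satisfies $aj+bk\equiv c\pmod m$. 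This single congruence, applied to a few distinguished monomials, is what will produce all the divisibilities in the statement.

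The next step is to isolate those distinguished monomials. Suppose first $a\neq0$, so that at each coordinate vertex $P_1=[1:0:0]$, $P_2=[0:1:0]$, $P_3=[0:0:1]$ the only $\sigma$-invariant lines through the vertex are the two coordinate lines (this uses $0<b-a<m$, $0<b<m$, $0<a<m$). For each $i$: if $P_i\notin C$ then $X_i^d$ occurs in $F$; if $P_i\in C$, smoothness forces the linear part of the local equation at $P_i$ to be a nonzero linear form in the two transverse variables which, being a $\sigma$-eigenvector with distinct eigenvalues, is a multiple of one of those variables, so that exactly one monomial $X_i^{d-1}X_j$ with $j\neq i$ occurs in $F$. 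This attaches to each vertex an index $f(i)\in\{1,2,3\}$ ($f(i)=i$ encoding ``$X_i^d\in F$'', $f(i)=j\neq i$ encoding ``$X_i^{d-1}X_j\in F$''), and up to permuting the coordinates---which replaces $m,(a,b)$ by another type with the same $m$---there are only seven combinatorial possibilities for $f$: the identity; a transposition with a fixed point; a $3$-cycle; two fixed points with one arrow into one of them; an in-star at a fixed point; a path of length two into a fixed point; a transposition with a pendant vertex.

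The proof is then finished case by case: substitute the weights of the two or three distinguished monomials into $aj+bk\equiv c$, eliminate $c$, and invoke coprimality. For the identity one gets $ad\equiv bd\equiv0$, so $m\mid d$. For the in-star, $m\mid a(d-1)$ and $m\mid b(d-1)$ give $m\mid d-1$. For the two-fixed-point type one reduces, after multiplying a congruence by $d$, to $m\mid ad$ and $m\mid bd(d-1)$, hence $m\mid d(d-1)$. For the transposition-with-fixed-point type, elimination yields $m\mid ad(d-2)$ and $m\mid bd(d-2)$, hence $m\mid d(d-2)\gcd(a,b)=d(d-2)$. For the path, elimination yields $m\mid a(d-1)^2$ while $\gcd(a,m)=1$ is forced, so $m\mid(d-1)^2$; and for the $3$-cycle, $m\mid a(d^2-3d+3)$ with $\gcd(a,m)=1$, so $m\mid d^2-3d+3$. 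The transposition-with-pendant type at first gives only $m\mid(d-1)(d-2)$; here one observes that if $\gcd(d-2,m)>1$ then every monomial of the relevant eigenspace is divisible by a fixed coordinate, whence $C$ would be reducible---impossible for a smooth curve---so $\gcd(d-2,m)=1$ and the bound improves to $m\mid d-1$. Finally the degenerate case $a=0$ is handled directly: $\gcd(b,m)=1$ forces all exponents of $Z$ occurring in $F$ to be congruent modulo $m$, irreducibility forces exponent $0$ to occur, and smoothness at $[0:0:1]$ forces exponent $d$ or $d-1$ to occur, giving $m\mid d$ or $m\mid d-1$.

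The delicate point is the monomial-isolation step: establishing rigorously that smoothness together with $\sigma$-invariance pins down exactly one leading monomial at each vertex, and recognising that the only failure of this---an eigenvalue coincidence at $P_3$---happens precisely when $a=0$, which is why that case is split off. The remaining bookkeeping is elementary but easy to botch: the recurring pitfall is to substitute one congruence into another prematurely and collect a spurious factor such as $2d(d-2)$ or $(d-1)(d-2)$, whereas the clean route is always to keep the relations in the shape $m\mid aN$ and $m\mid bN$ and then conclude $m\mid N\gcd(a,b)=N$---with reducibility of $C$ disposing of the one sub-case where even this is not quite enough.
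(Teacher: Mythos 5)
Your proof follows essentially the same route as the source the paper cites for this proposition (\cite[Corollary 33]{BaBacyc} --- the paper itself gives no proof here, and \S 2 states that the underlying classification follows Dolgachev's eigenspace analysis for quartics, generalised to degree $d$): diagonalise $\sigma$, use smoothness at the coordinate vertices together with the distinctness of the eigenvalues to force exactly one distinguished monomial per vertex into the unique eigenspace containing $F$, and run the seven-case congruence bookkeeping, with $a=0$ split off; the structure and all the resulting divisibilities are correct. Two details are asserted more strongly than they hold, though both are repaired by the symmetric recipe you yourself advertise at the end: in the path case, ``$\gcd(a,m)=1$ is forced'' is labelling-dependent (for $\sigma=\mathrm{diag}(1,\xi_9^2,\xi_9^3)$ acting on $X^4+Y^3Z+XZ^3$ the two path congruences read $b(d-1)\equiv 0$ and $a(d-1)+b\equiv 0 \pmod 9$ with $\gcd(b,m)=3$), so one should instead derive $m\mid a(d-1)^2$ and $m\mid b(d-1)^2$ and use $\gcd(a,b)=1$; and in the pendant case your reducibility observation is correct but actually shows that the configuration is vacuous for $a\neq 0$, since $\gcd(d-2,m)=1$ combined with $m\mid a(d-2)$ would force $a=0$.
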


\begin{thm}\label{BB} Let $C$ be a non-singular plane curve of degree
$d$ with $\sigma\in Aut(C)$. Then,
\begin{enumerate}
\item if $\sigma$ has order $d(d-1)$ then $Aut(C)=<\sigma>$, and $C$
is $K$-isomorphic to $X^d+Y^d+XZ^{d-1}=0$.
\item if $\sigma$ has order $(d-1)^2$ then $Aut(C)=<\sigma>$, and
$C$ is $K$-isomorphic to $X^d+Y^{d-1}Z+XZ^{d-1}=0$.
\item if $\sigma$ has order $d(d-2)$ then $C$ is $K$-isomorphic to
$X^d+Y^{d-1}Z+YZ^{d-1}=0$, and for $d\neq 4,6$ we have
$Aut(C)=<\sigma,\tau|\tau^2=\sigma^{d(d-2)}=1,\, and\,
\tau\sigma\tau=\sigma^{-(d-1)}>.$
\item if $\sigma$ has order $d^2-3d+3$ then $C$ is $K$-isomorphic to the Klein
curve $K_d$, and for $d\geq 5$ we have
$$Aut(C)=<\sigma,\tau|\sigma^{d^2-3d+3}=\tau^3=1\, and\,
\sigma\tau=\tau\sigma^{-(d-1)}>.$$
\item if $\sigma$ has order $\ell (d-1)$ with $\ell\geq 2$ then
$Aut(C)$ is cyclic of order $\ell'(d-1)$ with $\ell|\ell'$. If
$\ell=1$, the same conclusion holds if $\sigma$ is a homology (i.e. $P\sigma P^{-1}=diag(1,\xi_m^a,\xi_m^b)$ such
that exactly one of $a$ or $b$ is zero for some $P\in PGL_3(K)$).
\item if $\sigma$ has order $\ell d$ with $\ell\geq 3$ then $Aut(C)$
fixes a line and a point off that line, and $Aut(C)$ is an exterior
group as in Theorem \ref{teoHarui} (2) with $N$ of order $d$. When
$\ell=2,\,C$ may be a decendent of the Fermat curve or $Aut(C)$ is
an exterior group as in Theorem \ref{teoHarui} (2) where $|N|=d$.
\end{enumerate}
\end{thm}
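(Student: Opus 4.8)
The plan is to prove the six items by a single method. Fix an item and let $m$ be the prescribed order of $\sigma$. After a change of variables we may assume that $\sigma$ is the diagonal automorphism $(x:y:z)\mapsto(x:\xi_m^a y:\xi_m^b z)$ of type $m,(a,b)$ (see \cite{BaBacyc}). Imposing semi-invariance of a degree-$d$ defining polynomial $F$ under $\sigma$, together with smoothness of $\{F=0\}$, restricts sharply the monomials that may occur in $F$; for the large orders $m\in\{d(d-1),(d-1)^2,d(d-2),d^2-3d+3\}$ the list is extremely short, and matching it against the classification of cyclic subgroups of $Aut(C)$ (the degree-$d$ analogue of Table~1, proved in \cite{BaBacyc}) identifies $C$, up to $K$-isomorphism, with $X^d+Y^d+XZ^{d-1}$, $X^d+Y^{d-1}Z+XZ^{d-1}$, $X^d+Y^{d-1}Z+YZ^{d-1}$, and the Klein curve $K_d$, respectively. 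This already establishes the claims about plane models in (1)--(4).

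To determine $Aut(C)$ I would apply Theorem~\ref{teoHarui} to $H=Aut(C)$ itself. By the Proposition above, every element of $Aut(C)$ has order dividing one of $d-1,\,d,\,(d-1)^2,\,d(d-2),\,d(d-1),\,d^2-3d+3$, while $Aut(C)$ contains $\sigma$ of order $m$. A finite arithmetic check eliminates most branches of Harui's list: in general $m\nmid 6d^2$, so $Aut(C)$ is not conjugate into $Aut(F_d)$; $m\nmid 3(d^2-3d+3)$ outside the Klein case; none of the finite primitive subgroups of $PGL_3(K)$ (namely $PSL(2,7)$, $A_5$, $A_6$, $Hess_{36}$, $Hess_{72}$, $Hess_{216}$) has an element of order $m$; and if $Aut(C)$ were in case~(2) of Theorem~\ref{teoHarui} with $G'\cong A_4,A_5$ or $S_4$, the image $\bar\sigma$ of $\sigma$ in $G'$ would have order at most $5$, forcing $m$ to divide $|N|\cdot\mathrm{ord}(\bar\sigma)\le 5d$, again incompatible with the values of $m$ at hand. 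For $m=d(d-1)$ and $m=(d-1)^2$ the only surviving possibility is that $Aut(C)$ is cyclic (case~(1)); then $|Aut(C)|$ divides one of the six integers and is a multiple of $m$, and since $m$ divides none of the remaining five, $Aut(C)=\langle\sigma\rangle$. This proves (1) and (2).

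For (3), the surviving configuration is case~(2) with $|N|=d$ and $G'$ the dihedral group $D_{2(d-2)}$ (the relation $m=d(d-2)=|N|\cdot(d-2)$ forces this); the model $X^d+Y^{d-1}Z+YZ^{d-1}$ admits the involution $\tau:(X:Y:Z)\mapsto(X:Z:Y)$, for which $\tau^2=\sigma^{d(d-2)}=1$ and $\tau\sigma\tau=\sigma^{-(d-1)}$, so $\langle\sigma,\tau\rangle$ has order $2d(d-2)$ with the stated presentation; the hypothesis $d\neq4,6$ is exactly what makes the bound $|Aut(C)|\le 6d^2$ available, whence $Aut(C)=\langle\sigma,\tau\rangle$. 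Item (4) runs in the same way with $C=K_d$ and $\tau$ the order-$3$ cyclic permutation of the coordinates, the exceptional degree being now $d=4$ (where $Aut(K_4)=PSL(2,7)$ is primitive of order $168$), so that for $d\ge5$ one obtains $Aut(K_d)=\langle\sigma,\tau\mid\sigma^{d^2-3d+3}=\tau^3=1,\ \sigma\tau=\tau\sigma^{-(d-1)}\rangle$ of order $3(d^2-3d+3)$. For (5) one reruns the machinery with $m=\ell(d-1)$, or with $\sigma$ a homology of order $d-1$: the shape of $C$ places $Aut(C)$ in case~(2) of Theorem~\ref{teoHarui} with cyclic quotient of order exactly $d-1$, hence $Aut(C)$ is cyclic, and comparing orders gives $|Aut(C)|=\ell'(d-1)$ with $\ell\mid\ell'$. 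For (6), with $m=\ell d$, the shape of $C$ forced by an order-$\ell d$ element places $Aut(C)$ in case~(2) with $N$ cyclic of order dividing $d$; when $\ell\ge3$ an order-$\ell d$ element cannot have its order-$d$ part split off through a quotient $G'$ of order $\le d-1$, so $|N|=d$ and $Aut(C)$ fixes a point off $C$ together with the line on which it acts. When $\ell=2$ the extra factor $2$ is also compatible with $C$ being a descendant of the Fermat curve, so only the weaker dichotomy can be asserted.

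The step I expect to be the main obstacle is making the elimination of Harui's branches genuinely exhaustive, and in particular controlling the exceptional degrees $d=4$ and $d=6$, where $|Aut(C)|\le 6d^2$ fails and strictly larger automorphism groups do occur; this is exactly why items (3) and (4) carry the hypotheses $d\neq4,6$ and $d\ge5$. A second delicate point is in (6): upgrading $|N|\mid d$ to $|N|=d$ when $\ell\ge3$, which needs a careful analysis of how a cyclic group of order $\ell d$ can embed in an extension $1\to N\to H\to G'\to 1$ with $N$ cyclic of order dividing $d$ and $G'\leq PGL_2(K)$ of order at most $d-1$.
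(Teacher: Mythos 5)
The paper you are looking at does not actually prove Theorem~\ref{BB}: it is imported verbatim from the companion paper \cite{BaBacyc} (the authors cite \cite[Corollary 33]{BaBacyc} and \cite[\S4]{BaBacyc}), so there is no in-source proof to compare against. Judged on its own, your architecture --- diagonalize $\sigma$, read off the restricted model from the semi-invariance of $F$, then run $Aut(C)$ through Harui's trichotomy --- is the right one and matches the authors' general method. But several of your eliminations do not go through as written. First, the claim that ``in general $m\nmid 6d^2$'' is false precisely where it matters: $d(d-1)\mid 6d^2$ iff $(d-1)\mid 6$ (so it fails for $d=4,7$), and $d(d-2)\mid 6d^2$ iff $(d-2)\mid 12$ (so it fails for $d=5$, where $15\mid 150$). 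To kill the Fermat branch you must show $Aut(F_d)\cong(\mu_d^3/\mu_d)\rtimes S_3$ has no \emph{element} of order $m$, which is a short but necessary computation, not a divisibility remark. Second, in items (3) and (4) the inequality $|Aut(C)|\le 6d^2$ cannot close the argument: $2d(d-2)$ and $3(d^2-3d+3)$ are far below $6d^2$, so knowing $\langle\sigma,\tau\rangle\le Aut(C)$ and $|Aut(C)|\le 6d^2$ proves nothing. For (4) the correct finisher is Harui's case (4) itself ($|H|$ divides $3(d^2-3d+3)$ for Klein descendants); for (3) one must actually exhaust the admissible extensions $1\to N\to H\to G'\to 1$.

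The most serious gap is item (5). Your step ``the quotient $G'$ is cyclic of order exactly $d-1$, hence $Aut(C)$ is cyclic'' is a double non sequitur: nothing in Harui's theorem forces $G'$ to be cyclic rather than dihedral, $A_4$, $S_4$ or $A_5$, and even if $G'$ were cyclic, a cyclic-by-cyclic extension need not be cyclic. The mechanism the authors actually rely on (and use repeatedly in \S4 of this very paper, e.g.\ for types $4,(0,1)$ and $10,(2,5)$) is different: a homology of order $d-1$ has its center \emph{on} $C$ and that center is an inner Galois point by \cite[Lemma 3.7]{Harui}; by Yoshihara's theorem it is unique, hence fixed by all of $Aut(C)$; and the stabilizer of a point lying on $C$ is cyclic by Harui's case (1). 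Without this Galois-point argument (or a substitute), item (5) --- which is the workhorse of the whole degree-$5$ classification --- remains unproved in your proposal.
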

Now, assume, as usual, that $C$ is a non-singular plane curve of
degree $d=5$ with $\sigma\in Aut(C)$ of order $m$ that acts on
$F(X;Y;Z)=0$ as $(x,y,z)\mapsto (x,\xi_m^a y,\xi_m^bz)$ such that
$m$ is the maximal order in $Aut(C)$. Recall also that we can take
$\alpha=1$ by a convenient change of variables $P$.

The following result determines the full automorphism group of a
quintic curve $C$ which admits automorphisms of large orders.

\begin{cor} For non-singular plane curves of degree $5$
over an algebraic closed field $K$ of zero characteristic we have:
\begin{enumerate}
\item The cyclic group $\Z/20\Z$ appears as $Aut(C)$ inside $PGL_3(K)$, and is generated by
the transformation $(x,y,z)\mapsto (x,\xi_{20}^4 y,\xi_{20}^5 z)$ up
to conjugation by $P\in PGL_3(K)$. Moreover, $C$ is $K$-isomorphic
(through $P$) to the plane non-singular curve $X^5+Y^5+ XZ^4=0$. In
particular, $\widetilde{\rho(M_6^{Pl}(\Z/20\Z))}$ is an irreducible
locus with one element, where
$\rho(\Z/20\Z)=<diag(1,\xi_{20}^4,\xi_{20}^5)>$.
\item The cyclic group $\Z/{16}\Z$ appears as $Aut(C)$ inside
$PGL_3(K)$, and is generated by the transformation $(x,y,z)\mapsto
(x,\xi_{16} y,\xi_{16}^{12} z)$ up to conjugation by $P\in
PGL_3(K)$. Furthermore, $C$ is $K$-isomorphic (through $P$) to the
plane non-singular curve $X^5+Y^4 Z+ XZ^4=0$. In particular,
$\widetilde{\rho(M_6^{Pl}(\Z/16\Z))}$ is an irreducible locus with
one element, where $\rho(\Z/16\Z)=<diag(1,\xi_{16},\xi_{16}^{12})>$.
\item The group $SmallGroup(30,1)\,\,\cong\,\,<\sigma,\tau|\tau^2=\sigma^{15}=
(\tau\sigma)^2\sigma^{3}=1>$ of order 30 appears as $Aut(C)$ inside
$PGL_3(K)$, where $\sigma$ and $\tau$ are given, up to conjugation
through $P\in PGL_3(K)$, by $\sigma:(x,y,z)\mapsto (x,\xi_{15}
y,\xi_{15}^{11} z)$ and $\tau:(x,y,z)\mapsto (x,z,y)$. Moreover, $C$
is $K$-isomorphic (through $P$) to the curve $X^5+Y^4 Z+ YZ^4=0$. In
particular, $\widetilde{\rho(M_6^{Pl}(SmallGroup(30,1)))}$ is an
irreducible locus with one element and $\rho$ is given by
$\sigma,\tau$.
\item The group $SmallGroup(39,1)\,\,\cong\,\,<\tau,\sigma|\sigma^{13}=\tau^3=1,\,
\sigma\tau=\tau\sigma^{3}>$ of order 39 appears as $Aut(C)$ inside
$PGL_3(K)$ which is given by $\sigma:(x,y,z)\mapsto (x,\xi_{13}
y,\xi_{13}^{10} z)$ and $\tau:(x,y,z)\mapsto (y,z,x)$ up to
conjugation by $P\in PGL_3(K)$. Also, $C$ is $K$-isomorphic (through
$P$) to the curve $K_5:X^4 Y+Y^4 Z+ Z^4 X=0$. In particular,
$\widetilde{\rho(M_6^{Pl}(SmallGroup(39,1)))}$ is an irreducible
locus with one element, where $\rho$ is determined by $\sigma,\tau$.
\item The cyclic group $\Z/8\Z$ appears as $Aut(C)$ inside $PGL_3(K)$ that is generated by
the transformation $(x,y,z)\mapsto (x,\xi_{8} y,\xi_{8}^4 z)$ up to
conjugation by $P\in PGL_3(K)$, and $C$ is $K$-isomorphic (through
$P$) to the plane non-singular curve $X^5+Y^4Z+
XZ^4+\beta_{2,0}X^3Z^2$, with $\beta_{2,0}\neq 0,\pm 2$. The locus
$\widetilde{\rho(M_6^{Pl}(\Z/8\Z))}$ has dimension one where
$\rho(\Z/8\Z)=<diag(1,\xi_{8},\xi_{8}^4)>$.
\end{enumerate}
\end{cor}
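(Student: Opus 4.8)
The plan is to confront the hypothesis with Table~\ref{table:Cyclic Auto.} and then invoke Theorem~\ref{BB}. For $d=5$ one has $d(d-1)=20$, $(d-1)^2=16$, $d(d-2)=15$ and $d^2-3d+3=13$, and in Table~\ref{table:Cyclic Auto.} these four orders occur only for the parameter-free types $20,(4,5)$, $16,(1,12)$, $15,(1,11)$ and $13,(1,10)$, whose non-singular models are $X^5+Y^5+XZ^4$, $X^5+Y^4Z+XZ^4$, $X^5+Y^4Z+YZ^4$ and $X^4Y+Y^4Z+Z^4X$. So in cases (1)--(4) the hypothesis forces an automorphism of order $20$, $16$, $15$ or $13$ --- necessarily the maximal $\sigma$ --- the restrictions $d\neq4,6$ in part (3) and $d\geq5$ in part (4) are satisfied, and Theorem~\ref{BB}(1)--(4) simultaneously pins down the $K$-model of $C$ and computes $Aut(C)$: the cyclic groups $\Z/20\Z$, $\Z/16\Z$ with the stated diagonal generators in (1)--(2), and the metacyclic groups $\langle\sigma,\tau\mid\tau^2=\sigma^{15}=1,\ \tau\sigma\tau^{-1}=\sigma^{-4}\rangle$, $\langle\sigma,\tau\mid\sigma^{13}=\tau^3=1,\ \tau^{-1}\sigma\tau=\sigma^{3}\rangle$ in (3)--(4). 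As these models carry no parameter, each such locus reduces to a single point and is in particular irreducible (and non-empty, the models being smooth).

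It remains, in (3)--(4), to exhibit $\tau$ concretely and to match the groups with their GAP identifiers. On $X^5+Y^4Z+YZ^4$ the transposition $\tau\colon(x:y:z)\mapsto(x:z:y)$ has order $2$, fixes the equation and sends $\sigma=diag(1,\xi_{15},\xi_{15}^{11})$ to $diag(1,\xi_{15}^{11},\xi_{15})=\sigma^{-4}$; from $\tau\sigma\tau^{-1}=\sigma^{-4}$ one computes $(\tau\sigma)^2=\sigma^{-3}$, i.e.\ the relation $(\tau\sigma)^2\sigma^3=1$ defining $SmallGroup(30,1)$. On $X^4Y+Y^4Z+Z^4X$ the $3$-cycle $\tau\colon(x:y:z)\mapsto(y:z:x)$ has order $3$, fixes the equation and satisfies $\tau^{-1}\sigma\tau=\sigma^{3}$ with $\sigma=diag(1,\xi_{13},\xi_{13}^{10})$, which is the defining relation of the non-abelian group $SmallGroup(39,1)\cong\Z/13\Z\rtimes\Z/3\Z$. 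This finishes (1)--(4).

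The real content is case (5). The only type in Table~\ref{table:Cyclic Auto.} with $m=8$ is $8,(1,4)$, with model $X^5+Y^4Z+\alpha XZ^4+\beta_{2,0}X^3Z^2$ and $\alpha$ scalable to $1$, so $C\colon X^5+Y^4Z+XZ^4+\beta_{2,0}X^3Z^2=0$. First I would fix the smooth locus: writing the $Y$-free part as $X\bigl(X^4+\beta_{2,0}X^2Z^2+Z^4\bigr)$, the binary quartic $X^4+\beta_{2,0}X^2Z^2+Z^4$ has a repeated factor exactly when $\beta_{2,0}=\pm2$ (it becomes $(X^2\pm Z^2)^2$), and a direct check of the three partials (e.g.\ at $[\,i:0:1\,]$ for $\beta_{2,0}=2$) produces a singular point there; for $\beta_{2,0}\notin\{0,\pm2\}$ the partials have no common zero, so $C$ is smooth. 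The value $\beta_{2,0}=0$ must also be excluded, since it returns the type-$16$ model, whose full group is $\Z/16\Z$, so $8$ is not the maximal order there. For the full automorphism group, since $\sigma$ has order $8=2(d-1)$, Theorem~\ref{BB}(5) forces $Aut(C)$ to be cyclic of order $\ell'(d-1)=4\ell'$ with $2\mid\ell'$; combining with the Proposition (a cyclic subgroup of order $n$ has $n$ dividing one of $4,5,13,15,16,20$) leaves only $n\in\{8,16\}$, and $n=16$ is impossible because an order-$16$ generator would commute with $\sigma$, hence be diagonal ($\sigma$ having distinct eigenvalues), while for $\beta_{2,0}\neq0$ the $X^3Z^2$-term forces every diagonal automorphism of $C$ to have order $\leq8$. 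Hence $Aut(C)=\langle\sigma\rangle\cong\Z/8\Z$. Finally, two members of the family are $K$-isomorphic only if their parameters differ by a sign: an isomorphism normalizes $\langle\sigma\rangle$ and, once the shape of the model is imposed, must be diagonal, and diagonal changes of variables only replace $\beta_{2,0}$ by $\pm\beta_{2,0}$; so $\widetilde{\rho(M_6^{Pl}(\Z/8\Z))}$ is one-dimensional.

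I expect the only genuine obstacle to lie in (5): pinning the smooth locus to exactly $\beta_{2,0}\neq0,\pm2$, and, more delicately, ruling out that some special $\beta_{2,0}$ enlarges $Aut(C)$. The latter is what Theorem~\ref{BB}(5) provides --- it confines $Aut(C)$ to a cyclic group whose order is a multiple of $8$ --- after which the divisibility bound of the Proposition, the commuting-diagonal argument above, and the maximality of $m=8$ close the case; cases (1)--(4) are then essentially bookkeeping on top of Theorem~\ref{BB}.
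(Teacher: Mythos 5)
Your proposal is correct and follows essentially the same route as the paper: apply Theorem \ref{BB}(1)--(5) with $d=5$ to the corresponding types in Table \ref{table:Cyclic Auto.}, and for part (5) rule out a larger (necessarily cyclic, order-$16$) group via the condition $\beta_{2,0}\neq 0$, with $\beta_{2,0}\neq\pm2$ for non-singularity. You in fact supply more detail than the paper, which simply cites Theorem \ref{BB} for (1)--(4) and defers the one-dimensionality and irreducibility of $\widetilde{\rho(M_6^{Pl}(\Z/8\Z))}$ to \cite{BaBacyc1}, whereas your diagonal-normalization argument reproducing $\beta_{2,0}\mapsto\pm\beta_{2,0}$ gives that claim directly.
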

\begin{proof} Except the last statement on the loci, the proof is a direct consequence of Theorem
\ref{BB}, because one could apply the result for $d=5$ using the
table in \S2 when the curve $C$ has a cyclic automorphism of order:
$d(d-1)$, $(d-1)^2$, $d(d-2)$, $d^2-3d+3$ and $\ell (d-1)$ with
$\ell=2$ respectively. It remains, for the last case, to observe
that if $Aut(C)$ is bigger then it is always cyclic and should be
the group of order $16$. Therefore, $\beta_{2,0}\neq 0$ is the only
restriction to impose so that the curve has automorphism group
exactly $\Z/8\Z$ (Here, we note that $\beta_{2,0}\neq\pm 2$ to
ensure non-singularity. Also $\alpha$ is converted to 1 through a
diagonal change of the variables). Lastly, we refer to
\cite{BaBacyc1} for the proof of the fact on the dimension and the
irreducibility over $\mathbb{C}$ of the locus
$\widetilde{\rho(M_6^{Pl}(\Z/8\Z))}$.
\end{proof}

\section{Determination of the automorphism group with small cyclic
subgroups}

In this section, following the abuse of notation of the previous section
concerning models and curves, we study $Aut(C)$ for non-singular
plane curves $C$ of degree $d=5$ that appear in the table of \S2
such that the maximal order for any element inside the automorphism
group is $2 d$ or $\leq d$.

Also, we denote by $C_n$ the cyclic group $\Z/n\Z$ to
emphasize the multiplication notation as a subgroup inside
$PGL_3(K)$.

\begin{prop} Suppose that $C$ is a non-singular plane curve of degree 5 with $\sigma\in Aut(C)$
of order $10$ as an automorphism of maximal order. Then, we reduce
after conjugation by certain $P\in PGL_3(K)$ that $\sigma$ acts on
$C:X^5+Y^5+XZ^4+\beta_{2,0}X^3Z^2=0$ such that $\beta_{2,0}\neq 0$
as $\sigma:(x,y,z)\mapsto (x,\xi_{10}^2 y,\xi_{10}^5 z)$, and one of
the following situations happens:
\begin{enumerate}
\item If $\beta_{2,0}^2=20$ then $C$ is $K$-equivalent to the Fermat quintic $F_5:\,X^5+Y^5+Z^5=0$ and $Aut(C)$ is isomorphic to SmallGroup$(150,5)$.
\item If $\beta_{2,0}^2\neq 20$ then $Aut(C)$ is cyclic of order $10$.
Moreover, $C$ is a descendant of the Fermat curve of the form
$C_{P}:\,\,X^5+Y^5+Z^5+u\left(\xi_{10}^{6}Y^4Z+YZ^4\right)+u'\left(\xi_{10}^{2}Y^3Z^2+Y^2Z^3\right)=0,$
with $(u,u')\neq(0,0)$.
\end{enumerate}
\end{prop}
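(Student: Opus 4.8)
The plan is to start from the normal form of the table of \S2, cut down the possibilities for $Aut(C)$ with Theorem~\ref{BB}(6), and separate the two alternatives of the statement by a single criterion on an auxiliary binary quintic. By the type $10,(2,5)$ row (with $\alpha$ normalized to $1$) we take $C:\ X^5+Y^5+XZ^4+\beta X^3Z^2=0$, $\beta:=\beta_{2,0}\neq0$, on which $\sigma$ acts as $(x:y:z)\mapsto(x:\xi_{10}^2y:\xi_{10}^5z)=(x:\xi_5y:-z)$; a Jacobian check shows non-singularity forces $\beta\neq\pm2$. Set $B(X,Z):=X^5+\beta X^3Z^2+XZ^4=X(X^2-pZ^2)(X^2-qZ^2)$, where $p,q$ are the roots of $t^2+\beta t+1$, so $pq=1$ and $p+q=-\beta$; then $B$ has the five distinct roots $\{0,\pm a,\pm a^{-1}\}$ in the affine coordinate $w=X/Z$, with $a^2=p$. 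Since $10=2d$, Theorem~\ref{BB}(6) (case $\ell d$ with $\ell=2$) leaves two alternatives: (I) $Aut(C)$ fixes a point off a line and fits in $1\to N\to Aut(C)\to G'\to1$ with $|N|=d=5$; or (II) $(C,Aut(C))$ is a descendant of the Fermat quintic $F_5$.

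The crucial point I would establish is that $C$ is $K$-isomorphic to $F_5$ if and only if $B$ is a sum $\ell_1^5+\ell_2^5$ of fifth powers of two independent linear forms (then $C=Y^5+\ell_1^5+\ell_2^5$ in the coordinates $Y,\ell_1,\ell_2$), equivalently if and only if the five roots of $B$ form the $PGL_2(K)$-orbit of a cyclic group of order $5$, i.e.\ a regular pentagon. The set $\{0,\pm a,\pm a^{-1}\}$ always carries the involution $w\mapsto-w$ (induced on $\{Y=0\}$ by $\sigma^5=\mathrm{diag}(1,1,-1)$), which for a regular pentagon must be one of its five reflections; imposing that the five points form a pentagon then becomes a single equation in $p+q=-\beta$, and a short cross-ratio computation shows this equation to be exactly $\beta^2=20$. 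Hence if $\beta^2=20$ an explicit diagonalizing change of variables in $(X,Z)$ gives $B=\ell_1^5+\ell_2^5$, so $C\cong_K F_5$ and $Aut(C)\cong Aut(F_5)\cong\mathrm{SmallGroup}(150,5)$, which is part~(1); and if $\beta^2\neq20$ then $C\not\cong_K F_5$.

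Assume henceforth $\beta^2\neq20$. In alternative (I), $N=\ker(Aut(C)\to PGL_2(K))$ is a group of homologies of order $5$; since $\langle\sigma^2\rangle=\langle\mathrm{diag}(1,\xi_5^2,1)\rangle$ is already such a group, with centre $(0:1:0)$ and axis $\{Y=0\}$, we get $N=\langle\sigma^2\rangle$, and $Aut(C)$, normalizing $N$, fixes $(0:1:0)$ and the line $\{Y=0\}$; hence $G'=Aut(C)/N$ embeds in the stabilizer inside $PGL_2(K)$ of the five points $C\cap\{Y=0\}=\{0,\pm a,\pm a^{-1}\}$, which equals $\langle w\mapsto-w\rangle\cong\Z/2$ because these points are not a pentagon, so $|Aut(C)|=|N|\,|G'|=10$ and $Aut(C)=\langle\sigma\rangle\cong\Z/10$. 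In alternative (II), $Aut(C)\hookrightarrow Aut(F_5)\cong\mathrm{SmallGroup}(150,5)$, a group of order $150=2\cdot3\cdot5^2$ all of whose elements have order at most $10$; a subgroup containing the order-$10$ element $\sigma$ has order $10$, $30$, $50$ or $150$, but order $30$ forces a normal (cyclic) subgroup of order $15$, hence an element of order $15$; order $50$ forces the Sylow $5$-subgroup $(\Z/5)^2$ inside $Aut(C)$, hence $C\cong_K F_5$ ($F_5$ being, up to $K$-isomorphism, the only smooth plane quintic invariant under such a group) and $|Aut(C)|=150\neq50$; and order $150$ forces $C\cong_K F_5$, against $\beta^2\neq20$ — all impossible, so again $Aut(C)=\langle\sigma\rangle\cong\Z/10$. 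Finally, a change of variables in $(X,Z)$, together with renaming $Y$ to the variable absent from the perturbation, carries $B$ to the shape $Y^5+Z^5+u(\xi_{10}^6Y^4Z+YZ^4)+u'(\xi_{10}^2Y^3Z^2+Y^2Z^3)$ preserved by the transform of $\sigma$, with $(u,u')\neq(0,0)$, the latter because $(u,u')=(0,0)$ would again yield $C\cong_K F_5$; this realizes $C$ as the descendant of $F_5$ displayed in part~(2).

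The step I expect to be the main obstacle is the explicit change of variables together with the pentagon computation: verifying that the one-parameter family of configurations $\{0,\pm a,\pm a^{-1}\}$ meets the $PGL_2(K)$-orbit of the regular pentagon precisely along $\beta^2=20$, and exhibiting the linear map that simultaneously normalizes $B$ and conjugates $\sigma$ to the prescribed order-$10$ automorphism of the Fermat model; the group-theoretic bookkeeping in (I) and (II) is routine by comparison.
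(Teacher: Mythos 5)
Your overall architecture is sound and genuinely different from the paper's: you identify the Fermat case via the apolarity/pentagon criterion for the binary quintic $B(X,Z)=X^5+\beta X^3Z^2+XZ^4$ (the symmetric ansatz $\ell_2(X,Z)=\ell_1(X,-Z)$ does give $2u^5=1$, $10uv^4=1$, $\beta=20u^3v^2$, hence $\beta^2=400u^6v^4=40u^5=20$, matching the paper's $\beta_{2,0}^2=20$ obtained by solving $u=u'=0$ in the descendant form), and you replace the paper's long enumeration of SmallGroups of orders $20,30,60,120,300$ by the subgroup lattice of $Aut(F_5)$ in case (II) and by the stabilizer of the five collinear points in case (I). That is an attractive streamlining. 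However, there is one genuine gap, and it sits exactly at the step you declare routine: the assertion that the $PGL_2(K)$-stabilizer of $\{0,\pm a,\pm a^{-1}\}$ equals $\langle w\mapsto -w\rangle$ \emph{because these points are not a pentagon}. Not being a $C_5$-orbit only excludes $D_{10}$ (and $A_5$, which has no invariant $5$-set anyway); it does not exclude a stabilizer $C_4$ (orbit pattern $4+1$) or $S_3$ (orbit pattern $3+2$), both of which can act on five points of $\mathbb{P}^1$. And the $S_3$ case actually occurs in this family: for $\beta^2=-4/3$ one checks that $a^2=i/\sqrt3$ sends $\{0,1/a,-1/a\}$ to the cube roots of $-1$ under $w\mapsto(w-a)/(w+a)$, so an order-$3$ element of $PGL_2(K)$ fixing $\pm a$ permutes the five points; since every projectivity preserving the root set of $B$ lifts to $Aut(C)$ (adjust the $Y$-scaling by a fifth root of the multiplier), this produces an element of order $15$ and $|Aut(C)|=30$ there. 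So the literal claim ``stabilizer $=\Z/2$'' is false for some $\beta$, and your argument as written does not close.

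The gap is fillable, but only by reinstating the case analysis: a finite subgroup of $PGL_2(K)$ preserving five points and containing your involution must be $C_2$, $C_4$, $S_3$ or $D_{10}$ (orbit-size bookkeeping kills $V_4$, $D_8$, $A_4$, $S_4$, $A_5$); $D_{10}$ is the pentagon, i.e.\ $\beta^2=20$; a $C_4$ whose square is $w\mapsto-w$ fixes $0$ and rotates $\{\pm a,\pm a^{-1}\}$, forcing $a^2=\pm i$ and hence $\beta=-(a^2+a^{-2})=0$, excluded; and $S_3$ forces an element of order $15$ in $Aut(C)$, which contradicts your standing hypothesis that $10$ is the maximal order (this is where that hypothesis must actually be invoked --- your draft never uses it in case (I)). With those three exclusions added, your case (I) closes and is arguably cleaner than the paper's, which instead rules out each candidate $G'$ by exhibiting no $\tau\in Aut(C)$ satisfying the required relations. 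Two smaller points: the identification $N=\langle\sigma^2\rangle$ deserves a line (e.g.\ $N$ is a normal $5$-subgroup and $G'$ contains no element of order $5$ unless $G'\cong A_5$, excluded by Lemma~\ref{noafive}); and in the ``only if'' direction of your Fermat criterion you should justify reducing the isomorphism $P:C\to F_5$ to one preserving the centre $[0:1:0]$ and axis $Y=0$, which follows from the transitivity of $Aut(F_5)$ on the three coordinate points.
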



\begin{proof}
Because the maximal order is $10$ then, by the results of the
previous section, we reduce $C$ to be, up to $K$-isomorphism, of the
form $X^5+Y^5+\alpha XZ^4+\beta_{2,0}X^3Z^2=0$ with
$\alpha\beta_{2,0}\neq 0$, and by a diagonal change of variables we
always can take $\alpha=1$. This curve admits a homology $\sigma^2$
of order $5>3$ therefore, by Theorem \ref{BB}, $Aut(C)$ fixes a line
and a point off that line or $C$ is a descendent of Fermat curve.
Moreover, the center $[0:1:0]$ of this homology is an outer Galois
point (by Lemma 3.7 \cite{Harui}) and if $C$ is not $K$-isomorphic
to the Fermat curve $F_5: X^5+Y^5+Z^5$ then it is unique (Theorem 4'
\cite{Yoshihara}). Hence it should be fixed by $Aut(C)$.
\par Assume first that $Aut(C)$ fixes a line and a point off that line and $C$ is not a descendent of the Fermat quintic.
Therefore, $Aut(C)$ satisfies a short exact sequence $1\rightarrow
C_5\rightarrow Aut(C)\rightarrow G'\rightarrow 1$, where $C_5$ is
generated by $\sigma^2=[X;\zeta_5^2 Y;Z]$. In particular, $G'$
contains an element of order $2$ which is obtained by the image of
$\sigma$ under the restriction of the natural map $\varrho$ from
$PBD(2,1)$ to $PGL_2(K)$. Consequently $G'$ is conjugate to $C_2,
C_4, S_3, A_4, S_4$ or $A_5$. We claim that $G'$ is conjugate to
$C_2$ (in particular $Aut(C)$ is cyclic of order $10$).
\par
\par Since there are no groups of order $30$ (respectively, $60$) which contain elements of order $10$ and no higher orders then $G'$ is not conjugate to $S_3$ or $A_4$. Also, if $G'$ is conjugate to $S_4$ then $Aut(C)$ is conjugate to $SmallGroup(120,5)$ or $SmallGroup(120,35)$ because these are the only groups of order $120$ with elements of order $10$ and no higher orders appear. But one can verify that there are no elements $\tau\in Aut(C)$ of order $3$ or $10$ that commute with $\sigma^5$ therefore $Aut(C)$ is not conjugate to any of these two groups. In particular, $G'$ is not conjugate to $S_4$.
On the other hand, groups of order $20$ that contain elements of
order $10$ and no higher orders are $SmallGroup(20,\ell)$ where
$\ell=1,3,4$ or $5$. There is no element $\tau\in Aut(C)$ of order
$4$ such that $\sigma^2\tau\sigma^2=\tau$ or
$\sigma^2\tau\sigma^{-2}=\tau\sigma^2$, which implies that
$\ell\neq1,3$. Furthermore, there is no element $\tau$ of order 2 in
$Aut(C)$ which commutes with $\sigma^5$ thus $\ell\neq4,5$. This
also implies that $G'$ is not conjugate to $C_4$. Lastly, groups of
order $300$ that contain elements of order $10$ and no higher orders
are $SmallGroup(300,\ell)$ where $\ell=25,26,27,41$ or $43$. If
$\ell=43$ or $41$ then $Aut(C)$ contains exactly $3$ element of
order $2$ which contradicts the fact that $Aut(C)$ should have at
least $15$ such elements as $A_5$ does. Moreover, there are no
elements of order 2 in $Aut(C)$ such that
$\tau\sigma^5=\sigma^5\tau$ hence $\ell\neq25,26$ or $27$.
Consequently, $G'$ is not conjugate to $A_5$. This proves the claim
in this situation.
\par
Secondly, assume that $C$ is a descendant of the degree 5 Fermat
curve. This should happen through a transformation
 $P\in PGL_{3}(K)$ such that $P^{-1}\sigma P=\lambda\sigma'_i$ where
 $\sigma'_1:=[X;\zeta_{10}^{2b}Z;\zeta_{10}^{2a}Y],\,\sigma'_2:=[\zeta_{10}^{2a}Y;X;\zeta_{10}^{2b}Z]$ and  $\sigma'_3:=[\zeta_{10}^{2b}Z:\zeta_{10}^{2a}Y;X]$ with $5\nmid(a+b)$.
 In each case, we get a Fermat descendant of the form
$$C_{P}:\,X^5+Y^5+Z^5+u\left(\xi_{10}^{a'}A^4B+AB^4\right)+u'\left(\xi_{10}^{b'}A^3B^2+A^2B^3\right)=0,$$
where $\{A,B\}\subset\{X,Y,Z\}$. Moreover, $C_{P}$ is the Fermat
curve only if $\beta_{2,0}^2=20$, and $Aut(C_{P})$ is cyclic of
order $10$ otherwise. For example, if ${P^{-1}\sigma
P}=\lambda\sigma'_1$ then $\lambda=\zeta_{10}^{2},\,\,5|a+b+2$, and
${P=[\zeta_{10}^{2a+2}\alpha_3Y+\alpha_3Z;X;-\zeta_{10}^{2a+2}\gamma_3Y+\gamma_3Z]}$.
Therefore $C$ is transformed into $C_{P}$ of the form
$$
X^5+Y^5+Z^5+u\left(\xi_{10}^{6(a+1)}Y^4Z+YZ^4\right)+u'\left(\xi_{10}^{2(a+1)}Y^3Z^2+Y^2Z^3\right)=0
$$
such that $\alpha _3
\left(\alpha_3^4+\beta_{2,0}\gamma_3^2\alpha_3^2+\gamma_3^4\right)=1$.
Now, $C_{P}$ is the Fermat quintic curve only if $u=0=u'$ or
equivalently $$5 \alpha _3^4+\beta _{2,0} \gamma _3^2 \alpha _3^2-3
   \gamma _3^4=0=5 \alpha _3^4-\beta _{2,0} \gamma _3^2 \alpha
   _3^2+\gamma _3^4.$$ Consequently, $\beta_{2,0}^2=20$.
(For instance when $K=\mathbb{C}$, one can take
$\alpha_3=-\frac{(-1)^{3/5}}{2^{4/5}}$, and
$\gamma_3=\frac{(-1)^{3/5}\sqrt[4]{5}}{2^{4/5}}$
 for $\beta_{2,0}=2\sqrt{5}$. Also, for $\beta_{2,0}=-2\sqrt{5}$, one
can assume $\alpha_3=\frac{1}{2^{4/5}}$, and $\gamma_3=-\frac{i
\sqrt[4]{5}}{2^{4/5}}$). Otherwise (i.e. $u\neq0$ or $u'\neq0$), one
could take $a=0$ and $b=3$ because all solutions (recall that
$[X;\zeta_{10}^{2b}Z;\zeta_{10}^{2a}Y]\in Aut(C_{P})$) are
$K$-isomorphic through a change of variables of the form $X\mapsto
X',\,Y\mapsto \xi_{10}^{\ell}Y'$ and $Z\mapsto \xi_{10}^{\ell'}Z'$.
Hence $C_{P}$ admits no more automorphisms inside $Aut(F_5)$, that
is, $Aut(C_{P})$, as a subgroup of $Aut(F_5)$, is cyclic of order
$10$.

For the other situations (i.e. by replacing $\sigma_1'$ with
$\sigma_2'$ and $\sigma_3'$), one can reduce to some concrete
$(a,b)$ and obtain exactly the same system to solve involving
$\beta_{2,0}$ as before. Thus the same conclusion follows.

\vspace*{-0.7cm}\[\qedhere\]
\end{proof}
\begin{rem}\label{rem1} Recall that $Aut(F_5)$ is generated by $\eta_1:=[X;Z;Y],\eta_2:=[Y;Z;X],\eta_3:=[\xi_5X;Y;Z]$ and $\eta_4:=[X;\xi_5Y;Z]$
 of orders $2,3,5$ and $5$ respectively such that $$(\eta_1\eta_2)^2=(\eta_1\eta_3)(\eta_3\eta_1)^{-1}=(\eta_3\eta_4)(\eta_4\eta_3)^{-1}=\eta_1\eta_4^2\eta_1(\eta_3\eta_4)^{-3}=\eta_2\eta_3\eta_2^{-1}(\eta_3\eta_4)^{-4}=1.$$
\end{rem}

\par The following lemma is very useful to discard all the groups with a
subgroup isomorphic to $C_2\times C_2$ for non-singular plane curves of
degree 5.
\begin{lem} \label{noafive} There is no non-singular plane curve $C$ of degree 5 with
$C_2\times C_2\preceq Aut(C)$. In particular, the full automorphism group $Aut(C)$ is not isomorphic to any of the groups: $C_2\times
C_2$, $A_4$, $S_4$ or $A_5$.
\end{lem}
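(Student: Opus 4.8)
The plan is to argue directly and self-containedly, without invoking Harui's classification: put a copy of $C_2\times C_2$ inside $PGL_3(K)$ into diagonal form, and then use that a smooth plane curve is irreducible.

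\textbf{Step 1: reducing a Klein four-group to diagonal form.} Suppose $V\cong C_2\times C_2$ with $V\preceq Aut(C)\leq PGL_3(K)$, and write $V=\langle\sigma,\tau\rangle$. Both $\sigma,\tau$ are involutions of $PGL_3(K)$; since $\mathrm{char}\,K=0$, each lifts to a matrix $\tilde\sigma,\tilde\tau\in GL_3(K)$ with $\tilde\sigma^{2}=\tilde\tau^{2}=I$ (lift arbitrarily to $g$ with $g^{2}=cI$, and replace $g$ by $g/\sqrt{c}$). Because $\sigma$ and $\tau$ commute in $PGL_3(K)$ we have $\tilde\sigma\tilde\tau\tilde\sigma^{-1}=\lambda\tilde\tau$ for some $\lambda\in K^{*}$; squaring gives $\lambda^{2}=1$, while taking traces gives $\mathrm{tr}(\tilde\tau)=\lambda\,\mathrm{tr}(\tilde\tau)$. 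A $3\times3$ involution has eigenvalues in $\{1,-1\}$, hence trace in $\{\pm1,\pm3\}$, never $0$; so $\lambda=-1$ is impossible and $\tilde\sigma,\tilde\tau$ commute as matrices. Two commuting diagonalizable matrices are simultaneously diagonalizable, so after a change of coordinates $P\in PGL_3(K)$ the group $V$ consists of classes of diagonal matrices. A diagonal class of order dividing $2$ is of the form $[\mathrm{diag}(1,\pm1,\pm1)]$, so there are exactly four of them; since $|V|=4$, the three non-trivial elements of $V$ are $[\mathrm{diag}(1,1,-1)]$, $[\mathrm{diag}(1,-1,1)]$ and $[\mathrm{diag}(1,-1,-1)]=[\mathrm{diag}(-1,1,1)]$, i.e.\ each of them, for a suitable matrix representative, negates exactly one of the coordinates $X,Y,Z$.

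\textbf{Step 2: invariance of the defining equation.} Let $F$ be the degree-$5$ form defining $C$. Invariance of $C$ under $[\mathrm{diag}(1,1,-1)]$ gives $F(X,Y,-Z)=\mu F(X,Y,Z)$ with $\mu^{2}=1$; if $\mu=-1$ then $F$ is odd in $Z$, so $Z\mid F$ and $C$ is reducible — impossible, since a smooth plane curve is irreducible (two distinct components would meet, forcing a singular point). Hence $\mu=1$ and $F$ involves only even powers of $Z$; the same reasoning applied to $[\mathrm{diag}(1,-1,1)]$ and to $[\mathrm{diag}(-1,1,1)]$ forces $F$ to involve only even powers of $Y$ and of $X$ as well. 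But then every monomial of $F$ has all three exponents even, hence even total degree, contradicting $\deg F=5$; so $F\equiv0$, which is absurd. This proves that no smooth plane quintic admits $C_2\times C_2$ as a subgroup of its automorphism group.

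\textbf{Step 3: the ``in particular'' clause, and the main obstacle.} Each of $A_4$, $S_4$, $A_5$ contains the Klein four-group $\{e,(12)(34),(13)(24),(14)(23)\}$, and $C_2\times C_2$ contains itself; hence if $Aut(C)$ were isomorphic to any of $C_2\times C_2$, $A_4$, $S_4$ or $A_5$, we would get $C_2\times C_2\preceq Aut(C)$, contradicting Step 2. The only delicate point in the whole argument is the lifting in Step 1: realizing the abstract group $C_2\times C_2\subseteq PGL_3(K)$ by genuinely commuting, hence simultaneously diagonalizable, matrices — the trace obstruction $\mathrm{tr}(\tilde\tau)\neq0$ for a $3\times3$ involution is exactly what excludes the anticommuting lift. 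Once $V$ is diagonalized, the contradiction is a one-line parity count together with the standard irreducibility of smooth plane curves. (One could instead run $C_2\times C_2$ through the cases of Theorem \ref{teoHarui}, but excluding it from the primitive groups $A_5$, $Hess_{72}$, $Hess_{36}$ case by case is longer than this direct route.)
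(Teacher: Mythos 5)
Your proof is correct, and it takes a genuinely different and more self-contained route than the paper's. The paper runs $C_2\times C_2$ through Mitchell's and Harui's classification: it first excludes the descendant cases by the order counts $4\nmid |Aut(F_5)|=150$ and $4\nmid |Aut(K_5)|=39$, then uses Harui's exact sequence (with $N$ trivial because $2\nmid 5$) to place the four-group inside $PGL_2(K)$ acting on $Y,Z$, puts the curve in type $2,(0,1)$ form, and derives the same final contradiction you do --- the defining form becomes divisible by a coordinate, violating irreducibility of a smooth plane curve. You replace all of that classification machinery with an elementary linear-algebra fact: every Klein four-group in $PGL_3(K)$ (char $0$) lifts to genuinely commuting involutive matrices, because the obstruction scalar $\lambda$ satisfies $\lambda^2=1$ while $\mathrm{tr}(\tilde\tau)\neq 0$ for a $3\times 3$ involution (one could equally use $\det$ to get $\lambda^3=1$); hence the group is simultaneously diagonalizable and its three involutions are exactly the three coordinate sign-flips. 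The ensuing parity argument --- each flip forces either divisibility by a coordinate (reducible) or evenness in that coordinate, and all-even exponents are impossible in odd degree --- is clean and works verbatim for any odd degree $\geq 3$, not just $d=5$, which is a small bonus in generality. What the paper's approach buys instead is uniformity with the rest of its case analysis, where Harui's theorem is invoked throughout; what yours buys is independence from that theorem and from the Fermat/Klein order computations. Both are valid; yours is arguably the shorter complete argument.
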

\begin{proof}
By Mitchell \cite{Mit} and Harui \cite{Harui}, the group $C_2\times
C_2$ inside $PGL_3(K)$, which gives invariant a non-singular plane
curve $C$ of degree $d$ should fix a point not lying on $C$ or $C$
is a descendant of either the Fermat or the Klein curve. For $d=5$,
it could not be a descendant of the Fermat or the Klein curve
because $4$ does not divide $|Aut(F_5)|=150$ or $|Aut(K_5)|=39$.
Therefore, the automorphism subgroup $C_2\times C_2$ fixes a point
not in $C$. Moreover, because $2$ does not divide the degree $d=5$,
then by Harui's main theorem \cite{Harui}, we can think about the
elements of $C_2\times C_2$ in a short exact sequence: $1\rightarrow
N=1\rightarrow H\rightarrow H\rightarrow 1$, where $H$ is conjugate
to $C_2\times C_2$ inside $PGL_2(K)$. We can assume that $H$ acts
only on the variables $Y,Z$ because $N$ is the subgroup of $Aut(C)$
that acts on $X$. Now, let $\sigma,\tau\in H\subseteq PGL_2(K)$ be
of order two such that $\sigma\tau=\tau\sigma$ then, we can suppose,
up to a coordinate change inside $\mathbb{P}^2$, that
$\sigma=diag(1,-1)$ and $\tau=[aY+bZ,cY-aZ]\neq\sigma$.
Consequently, the curve $C$ has a model of type $2, (0,1)$. But all
possible $\tau$ does not retain invariant the equation of the type
$2, (0,1)$ for any choice of the free parameters and hence the
result follows. Indeed, because $\tau$ commutes with $\sigma$ then
$\tau=diag(-1,1)$ or $[X,bZ,cY]$ with $bc\neq 0$ and therefore $C$
has simultaneously the expressions: $Z^4 L_{1,Z}+Z^2
L_{3,Z}+L_{5,Z}$ and $Y^4L_{1,Y}+Y^2L_{3,Y}+L_{5,Y}$. Thus, $C$ has
the form $X\cdot G(X;Y;Z)$ with $G$ of degree 4 if $L_{5,Z}$ and
$L_{5,Y}$ are non-zero, a contradiction to irreducibility.
\end{proof}

Now, we deal with quintic curves whose automorphism group admits a
cyclic element of order 5 (respectively 4) as an automorphism of
maximal order.

\begin{prop}\label{type 5}
If $C$ is a degree 5, non-singular plane curve with an automorphism
$\sigma$ of maximal order 5, then we reduce, up to projective
equivalence, to one of the following situations: $Aut(C)$ is cyclic
of order 5 and $C$ is $K$-equivalent to the type $5, (0,1)$ of the
form $Z^5+L_{5,Z}$ or $Aut(C)$ is isomorphic to the Dihedral group
$D_{10}$ of order 10 where $Aut(C)=<\sigma,\tau>$ with
$\sigma(x,y,z)=(x,\xi_5 y,\xi_5^2 z)$ and $\tau(x,y,z)=(z,y,x)$, and
the curve $C$ has the form
$X^5+Y^5+Z^5+\beta_{3,1}X^2YZ^2+\beta_{4,3}XY^3Z=0$ such that
$(\beta_{3,1},\beta_{4,3})\neq(0,0)$.

\end{prop}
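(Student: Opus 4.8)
The plan is to split according to the two normal forms that the table of \S2 attaches to an automorphism of order $5$: either the type $5,(1,2)$ model $F=X^5+Y^5+Z^5+\beta_{3,1}X^2YZ^2+\beta_{4,3}XY^3Z$ with $\sigma:(x:y:z)\mapsto(x:\xi_5y:\xi_5^2z)$, or the type $5,(0,1)$ model $F=Z^5+L_{5,Z}(X,Y)$ with $\sigma:(x:y:z)\mapsto(x:y:\xi_5z)$, a homology. In each case I would pin down $Aut(C)$ using Harui's classification (Theorem \ref{teoHarui}), Lemma \ref{noafive}, and the hypothesis that $5$ is the largest order of an element of $Aut(C)$.

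For the model of type $5,(1,2)$, first observe that $\tau:(x:y:z)\mapsto(z:y:x)$ fixes $F$ and satisfies $\tau\sigma\tau=\sigma^{-1}$, so $D_{10}=\langle\sigma,\tau\rangle\leq Aut(C)$ and $Aut(C)$ is non-cyclic; moreover, if $\beta_{3,1}=\beta_{4,3}=0$ then $C$ is the Fermat quintic, which admits the order-$10$ automorphism $(x:y:z)\mapsto(x:z:\xi_5y)$, against the hypothesis, so $(\beta_{3,1},\beta_{4,3})\neq(0,0)$. To obtain $Aut(C)=D_{10}$ I would run through Theorem \ref{teoHarui} for $H=Aut(C)$: case (1) is excluded since $H$ is non-cyclic; case (4) since $10\nmid 3(5^2-3\cdot5+3)=39$; case (5) since $PSL(2,7),A_6,Hess_{216}$ cannot occur for $d=5$ by the corollary above, $A_5$ contains $C_2\times C_2$ (Lemma \ref{noafive}), and $10\nmid|Hess_{72}|,|Hess_{36}|$. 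In case (2), $\sigma$ is not a homology, hence not in the kernel $N$, so the image $G'$ in $PGL_2(K)$ has an element of order $5$; the list in Theorem \ref{teoHarui}(2) then forces $N=1$ and $G'\cong D_{2m}$ with $5\mid m$ (the option $G'\cong A_5$ being impossible by Lemma \ref{noafive}), and $m>5$ would give an element of order $\geq10$, so $m=5$ and $H=D_{10}$. In case (3), $H$ embeds into $Aut(F_5)$, of order $150$ (consistent with the core of $F$ being $X^5+Y^5+Z^5$); among the divisors of $150$ that are multiples of $10$, only $|H|=10$ survives, because every group of order $30$ has an element of order $15$, a subgroup of $Aut(F_5)$ of order $50$ is isomorphic to $C_5\times D_{10}$ (hence has an element of order $10$), and $Aut(F_5)$ itself has elements of order $10$; so $H=D_{10}$. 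Collecting the cases gives $Aut(C)=\langle\sigma,\tau\rangle\cong D_{10}$.

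For the model of type $5,(0,1)$, $L_{5,Z}$ is separable by smoothness, and $\sigma$ is a homology of order $5=\deg C$ with centre $P=[0:0:1]\notin C$. Since $C$ is not the Fermat quintic (it has no order-$10$ automorphism), $P$ is the unique outer Galois point of $C$ by Lemma 3.7 of \cite{Harui} and Theorem 4$'$ of \cite{Yoshihara}, hence is fixed by $Aut(C)$; as $\langle\sigma\rangle$ is the Galois group of the projection $\pi_P$, it is normal in $Aut(C)$, the axis $\{Z=0\}$ is $Aut(C)$-invariant, so $Aut(C)\leq PBD(2,1)$ with $Aut(C)\cap\ker\varrho=\langle\sigma\rangle$ central (a homology preserving $Z^5+L_{5,Z}$ lies in $\langle\sigma\rangle$). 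If $g\in Aut(C)$ had nontrivial image of order $k>1$ in $PGL_2(K)$, then $|\langle g\rangle\cap\langle\sigma\rangle|$ divides $5$, so either $\langle g\rangle$ has order $5k>5$, or $\langle g,\sigma\rangle\cong C_k\times C_5$, in which case $5\nmid k$ gives an element of order $5k>5$ while $5\mid k$ gives $C_5\times C_5\leq Aut(C)$, forcing $C$ to be the Fermat quintic (any $C_5\times C_5$ in $PGL_3(K)$ is conjugate to the diagonal torus subgroup, whose only invariant smooth plane quintic is projectively $F_5$). All of these contradict the hypothesis, so $Aut(C)=\langle\sigma\rangle\cong C_5$.

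The hard part will be the final eliminations: in the type $5,(1,2)$ case, discarding the subgroups of $Aut(F_5)$ of orders $30,50,150$ together with the dihedral (and $A_5$) options in Harui's case (2) by means of the order-$5$ bound; and in the type $5,(0,1)$ case, the input that a $C_5\times C_5$ inside $PGL_3(K)$ can only leave the Fermat quintic invariant. The remaining steps are routine bookkeeping with Harui's list, Lemma \ref{noafive}, and the structure of groups of small order.
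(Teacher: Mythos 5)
Your proposal is correct, and it follows the same skeleton as the paper's proof: the split into the two normal forms $5,(1,2)$ and $5,(0,1)$ from the table of \S 2, Harui's Theorem \ref{teoHarui}, Lemma \ref{noafive}, and the outer Galois point argument (Harui's Lemma 3.7 plus Yoshihara's uniqueness) for the homology case. Where you genuinely diverge is in how the final eliminations are carried out. For type $5,(1,2)$ the paper argues by explicit coordinates: it writes the possible matrices $[\alpha_1X+\alpha_3Z;Y;\gamma_1X+\gamma_3Z]$ and compares coefficients to get $|Aut(C)|=10$, and in the Fermat-descendant case it computes the admissible $P$ and checks $Aut(F_5)\cap Aut(C_P)$ directly. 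You instead count orders: in Harui's case (2) the image of $\sigma$ forces $G'\cong D_{2m}$ with $N=1$ and $5\mid m$, hence $m=5$; in case (3) you note that any subgroup of $Aut(F_5)$ of order $30$, $50$ or $150$ contains an element of order $>5$. For type $5,(0,1)$ the paper enumerates the possible $G'$ and kills each by a separate non-existence computation, whereas you exploit the centrality of the homology $\sigma$ to show that any $g$ with nontrivial image in $PGL_2(K)$ yields either an element of order $>5$ or a $C_5\times C_5$, which (being diagonalizable in $PGL_3(K)$ because commuting lifts to $GL_3$ have scalar commutator of order dividing $\gcd(3,5)=1$) stabilizes only the Fermat quintic. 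Your route is less computational and more robust; the paper's yields the explicit matrix forms used later in Table 2. The two facts you leave as asserted --- diagonalizability of $C_5\times C_5$ and that the torus-invariant smooth quintics are projectively $F_5$ --- are true and easily verified, so I see no gap.
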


\begin{proof}
We consider the situations in \S1 concerning types $5, (a,b)$.
\begin{enumerate}
  \item Type $5, (1,2):$ $Aut(C)$ is not conjugate to any of the Hessian groups $Hess_{36}$
or $Hess_{72}$ and is not conjugate to a subgroup of $Aut(K_5)$,
since there are no elements of order $5$. On the other hand, always
$C$ admits a bigger automorphism group isomorphic to $D_{10}$
through the transformation $[Z;Y,X]$ (in particular, $Aut(C)$ is not
cyclic). Moreover, by the previous Lemma
\ref{noafive}, $Aut(C)$ is not conjugate to $A_5$ (as a finite primitive subgroup of $PGL_3(K)$). Consequently, $C$ is a descendant of the Fermat quintic or $Aut(C)$ fixes a line and a point off that line.
\begin{itemize}
\item If $Aut(C)$ fixes a line and a point off that line then this line should be $Y=0$ and the point is $[0:1:0]$ because
$<\sigma,\tau>\subseteq Aut(C)$ with
$\sigma(x,y,z)=(x,\xi_5y,\xi_5^2z)$ and $\tau(x,y,z)=(z,y,x)$. In
particular, elements of $Aut(C)$ are of the form
$[\alpha_1X+\alpha_3Z;Y;\gamma_1X+\gamma_3Z]$. Hence, from the
coefficients of $Y^3Z^2$ and $Y^3X^2$ (respectively, $X^4Y$ and
$YZ^4$), we should have $\alpha_1=0=\gamma_3$ or
$\alpha_3=0=\gamma_1$. Moreover $\alpha_{3,1}^5=\gamma_{1,3}^5=1$
and $\alpha_{3,1}\gamma_{1,3}=1$ or $(\alpha_{3,1}\gamma_{1,3})^2=1$,
since $(\beta_{3,1},\beta_{4,3})\neq(0,0)$. This implies that
$Aut(C)$ has order 10.

\item If $C$ is a descendant of the Fermat curve $F_5$ through a transformation $P\in PGL_3(K)$ and neither a line nor a point is leaved invariant.
Then, $P^{-1}[X;\xi_5Y;\xi_5^2Z]P=[X;\xi_5Y;\xi_5^2Z]$, because
elements of order $5$ in $Aut(F_5)$ are of the form
$\sigma_{a,b}:=[X;\xi_5^aY;\xi_5^bZ]$, and if
$P^{-1}[X;\xi_5Y;\xi_5^2Z]P=\lambda\sigma_{a,b}$ then
$(a,b)\in\{(1,2),(2,1),(3,4),(4,3),(1,4),(4,1)\}$, but all are
conjugate in $Aut(F_5)$. Now, $P$ has one of the forms $[X;\beta
Y;\gamma Z], [Y;\alpha Z;\beta X]$ or $[Z;\alpha X;\beta Y]$ and it
is straightforward to verify that there are no more automorphisms in
$Aut(F_5)\cap Aut(C_{P})$. That is, $Aut(C)$ is conjugate to
$D_{10}$.
\end{itemize}

  \item Type $5, (0,1):$ This curve has a homology $\sigma$ of order $d$ with center $[0:0:1]$ and axis $Z=0$ then (by Lemma $3.7$ in \cite{Harui}),
   this point is an outer Galois point of $C$. Moreover, $C$ is not $K$-isomorphic to the Fermat curve because automorphisms of $C$ has orders $\leq5$.
    Then (by Yoshihara \cite{Yoshihara}) this Galois point is unique and hence should be fixed by $Aut(C)$.
    In particular, $Aut(C)$ fixes a line ($Z=0$) and a point off that line ($[0:0:1]$). Thus elements of $Aut(C)$ have the form $[\alpha_1X+\alpha_2Y;\beta_1X+\beta_2Y;Z]$. Furthermore, $Aut(C)$ satisfies a short exact sequence  $1\rightarrow N\rightarrow Aut(C)\rightarrow G'\rightarrow 1$
with $N$ is cyclic of order dividing $5$ and $G'$ is conjugate to $C_m, D_{2m}, A_4, S_4$ or $A_5$ where $m\leq4$ and for the case  $G'=D_{2m}$ we have $m|3$ or $N$ is trivial.
\par If $N$ is trivial then $G'$ should be conjugate to $A_5$ (because none of the other groups contains elements of order 5) then $C_2\times C_2$ is a subgroup of $Aut(C)$ which is not possible by Lemma \ref{noafive}. Hence, $N$ can not be the trivial group.
\par If $N$ has order $5$ then for any value of $G'$ (except possibly the trivial group, $C_2$, $C_4$ or $A_4$ such that $Aut(C)$ is conjugate to $D_{10}$, $SmallGroup(20,3)$ or $A_5$) there are elements of order $>5$ in $Aut(C)$ a contradiction. Again, by Lemma \ref{noafive}, we conclude that $G'$ can not be $A_4$. On the other hand, there exists no elements $\tau\in Aut(C)$ of order $2$ such that $\tau\sigma\tau=\sigma^{-1}$ hence $G'$ is not $C_2$. Moreover, there are no elements $\tau\in Aut(C)$ of order $4$ such that $(\tau\sigma)^2=1$ and $\sigma\tau\sigma^{-1}=\tau\sigma$ thus $G^{'}$ is not conjugate to $C_4$. Consequently, $Aut(C)$ is cyclic of order 5.
\vspace*{-0.7cm}\[\qedhere\]
\end{enumerate}
\end{proof}

\begin{prop} Suppose that $C$ is a non-singular plane curve of degree 5 with $\sigma\in Aut(C)$
of order 4 as an element of maximal order, then we reduce, up to
$K$-isomorphism, to one of the following two situations: $Aut(C)$ is
cyclic of order $4$ which is generated by $\sigma(x,y,z)=(x, y,\xi_4
z)$, and $C$ is given by $Z^4Y+ L_{5,Z}(X,Y)=0$ such that
$L_{5,Z}(X,\zeta_mY)\neq\zeta^r_m
                L_{5,Z}(X,Y)$ where $(m,r)\in\{(8,1),(16,1),(20,4)\}$ or $\sigma(x,y,z)=(x,\xi_4 y,\xi_4^2 z)$
and $C$ is defined by
$X^5+X(Z^4+\alpha
Y^4)+\beta_{2,0}X^3Z^2)+\beta_{3,2}X^2Y^2Z+\beta_{5,2}Y^2Z^3=0$
such that $\alpha\beta_{5,2}\neq0$.
\end{prop}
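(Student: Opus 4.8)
The plan is to run through the three diagonal types of an order-$4$ automorphism that appear in the table of \S2: by the classification of cyclic subgroups recalled there (and in \cite{BaBacyc}), after a suitable change of coordinates $\sigma$ is conjugate to an automorphism of type $4,(1,3)$, $4,(1,2)$ or $4,(0,1)$, and $C$ is then brought into the corresponding entry of that table. I treat the three cases in turn. Type $4,(1,3)$ can be disposed of at once: its normal form factors as $X^5+X\bigl(Z^4+\alpha Y^4+\beta_{4,2}Y^2Z^2\bigr)+\beta_{2,1}X^3YZ=X\cdot\bigl(X^4+Z^4+\alpha Y^4+\beta_{4,2}Y^2Z^2+\beta_{2,1}X^2YZ\bigr)$, so $X\mid F$ and $C$ is reducible, contradicting non-singularity; hence this type never occurs. (In any event, as a subgroup of $PGL_3(K)$ this $\langle\sigma\rangle$ is conjugate to the one of type $4,(1,2)$, so nothing is lost.)

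For type $4,(0,1)$, the element $\sigma=[X;Y;\xi_4 Z]$ is a homology of order $d-1=4$ and $C$ has the shape $Z^4L_{1,Z}(X,Y)+L_{5,Z}(X,Y)=0$. Irreducibility forces $L_{1,Z}\neq 0$: otherwise $F=L_{5,Z}(X,Y)$ would split into five lines through $[0:0:1]$. Since every linear substitution in $X,Y$ commutes with $\sigma$, we may normalize $L_{1,Z}=Y$ and rescale, so $C:\,Z^4Y+L_{5,Z}(X,Y)=0$. By Theorem \ref{BB}(5), in the case $\ell=1$ with $\sigma$ a homology, $Aut(C)$ is cyclic of order a multiple of $4$; since $\sigma$ has maximal order $4$, it follows that $Aut(C)=\langle\sigma\rangle\cong\Z/4\Z$. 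It then remains to see which $L_{5,Z}$ actually keep this maximal order. By the bound on orders of cyclic elements of \S3, the only cyclic groups of order a proper multiple of $4$ that act on a non-singular quintic are $\Z/8\Z$, $\Z/{16}\Z$ and $\Z/{20}\Z$, and for those $C$ is $K$-isomorphic to the forms of types $8,(1,4)$, $16,(1,12)$ and $20,(4,5)$ (the Corollary of \S3 on quintics with a cyclic automorphism of large order). Conjugating each of these three generators into the coordinate system in which $C$ reads $Z^4Y+L_{5,Z}$ and imposing invariance of $C$ translates precisely into the equalities $L_{5,Z}(X,\zeta_m Y)=\zeta_m^{r}L_{5,Z}(X,Y)$ with $(m,r)\in\{(8,1),(16,1),(20,4)\}$; under our hypothesis none of them can hold, which is the first alternative of the statement.

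For type $4,(1,2)$, $\sigma$ acts as $(x,y,z)\mapsto(x,\xi_4 y,\xi_4^2 z)$ and $C$ is of the form $X^5+X\bigl(Z^4+\alpha Y^4\bigr)+\beta_{2,0}X^3Z^2+\beta_{3,2}X^2Y^2Z+\beta_{5,2}Y^2Z^3=0$ with $\alpha\neq 0$. Here only the non-singularity conditions matter: if $\beta_{5,2}=0$ then $X\mid F$, so $C$ is reducible; and if $\alpha=0$ then a direct computation shows that $F$ and all its first partial derivatives vanish at $[0:1:0]\in C$, so $C$ is singular there. Hence $\alpha\beta_{5,2}\neq 0$, which is the second alternative; the statement makes no claim about $Aut(C)$ in this case beyond the presence of $\sigma$, so no further analysis is needed.

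I expect the main obstacle to be the bookkeeping inside the type $4,(0,1)$ case, namely matching the three pairs $(m,r)\in\{(8,1),(16,1),(20,4)\}$ to the three larger cyclic normal forms: this amounts to tracking the matrices $P\in PGL_3(K)$ that bring each of those curves into the shape $Z^4Y+L_{5,Z}(X,Y)$, and along the way one must again invoke Theorem \ref{BB}(5) to guarantee that no non-cyclic automorphism group can intervene. Once the reducibility and singularity checks above are carried out, the other two cases are routine.
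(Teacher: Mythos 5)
Your handling of types $4,(1,3)$ and $4,(0,1)$ is essentially sound. For $4,(0,1)$ you take a slightly different route from the paper: you invoke Theorem \ref{BB}(5) (the $\ell=1$ homology case) to get that $Aut(C)$ is cyclic of order a multiple of $4$, whereas the paper rederives this via the inner Galois point argument (Harui's Lemma 3.7 plus Yoshihara's uniqueness, forcing $Aut(C)$ to fix a point on $C$ and hence be cyclic); both are legitimate, and yours is the shorter appeal to an already-stated result. The matching of the pairs $(m,r)$ to the types $8,(1,4)$, $16,(1,12)$, $20,(4,5)$ is left as a computation in both your write-up and the paper's, so that is acceptable parity.

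The genuine gap is in the type $4,(1,2)$ case. You assert that ``the statement makes no claim about $Aut(C)$ in this case beyond the presence of $\sigma$,'' but that is a misreading: the proposition (consistently with Table 2, which lists $\Z/4\Z$ with $\rho=[X;\xi_4Y;\xi_4^2Z]$ as a \emph{full} automorphism group, and with the paper's own proof) asserts that $Aut(C)$ is cyclic of order $4$ in this branch as well, and establishing that is where almost all of the work lies. Checking $\alpha\beta_{5,2}\neq 0$ only guarantees irreducibility and non-singularity; it says nothing about $Aut(C)$. One must run through Harui's classification: Fermat and Klein descendants are excluded since $4\nmid 150$ and $4\nmid 39$, and $A_5$ has no order-$4$ elements, but the Hessian subgroups $Hess_{36}$ and $Hess_{72}$ are \emph{not} excluded by the hypothesis that all orders are $\leq 4$, nor by Lemma \ref{noafive} (they contain no Klein four-groups); the paper eliminates them by an explicit computation, conjugating $\sigma^2$ to the reflection $[Z;Y;X]$ and showing that invariance under the standard generators of the Hessian groups forces the quintic to be singular. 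Only then is one reduced to the line-and-point case, where $N=1$ (an order-$5$ kernel would violate maximality of $4$) and $G'\in\{C_4,D_8,S_4\}$, the last two being killed by Lemma \ref{noafive} or by the paper's direct check that no involution $\tau$ satisfies $\tau\sigma\tau=\sigma^{-1}$. Without this analysis the second alternative of the proposition is not proved.
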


\begin{proof}
We consider the situations in \S1 concerning types $4, (a,b)$.

First, we observe that $C$ can not be a descendant of the Fermat
curve $F_5$ or the Klein curve $K_5$ because $|Aut(F_5)|=150$ and
$|Aut(K_5)|=39$ and $4\nmid|Aut(F_5)|$ or $|Aut(K_5)|$, and $Aut(C)$
is not conjugate to $A_5$ since there are no elements of order 4.
Consequently, $Aut(C)$ is conjugate to $Hess_{36}, Hess_{72}$ or it
should fix a line and a point off that line by the result of Harui.
Moreover, for the last case, we need to consider the situation of a
short exact sequence of the form $1\rightarrow N=1\rightarrow
Aut(C)\rightarrow G'\rightarrow 1,$ where $G'$ should contain an
element of order $4$. That is, $G'$ is conjugate to a cyclic group
$C_4$ or a Dihedral group $D_{8}$ (by use of Lemma \ref{noafive}).
\begin{enumerate}
        \item Type $4, (1,3)$; All such curves
decompose into $X \cdot G(X,Y,Z)$ and therefore are reducible.
\item Type $4, (0,1)$; This curve admits a homology of order $d-1$ with center $[0:0:1]$ then it follows, by Harui
\cite{Harui}, that this point is an inner Galois point of $C$, and
moreover it is unique by Yoshihara \cite{Yoshihara}. Therefore, this
point should be fixed by $Aut(C)$ consequently, $Aut(C)$ is cyclic.
It also follows by our assumption that
$C$ is not conjugate to any of the above
that $Aut(C)$ is cyclic of order $4$. More precisely, we can rewrite type $4,(0,1)$ as
                $Z^4Y'+L_{5,Z}'(X,Y')=0$ and it is necessary to
                impose the condition that $L_{5,Z}'(X,\zeta_mY')\neq\zeta^r_m
                L_{5,Z}'(X,Y')$ where $(m,r)\in\{(8,1),(16,1),(20,4)\}$ (otherwise; we get a bigger automorphism group
                conjugate to those for types $8,(1,4),\,\,16, (1,12)$ or $20, (4,5)$).
            \item Type $4, (1,2)$; First, by the same reason as type $4,(1,3)$, we need to assume that $\beta_{5,2}\neq 0$.
            Secondly, we'll show that $Aut(C)$ is not conjugate to any of the Hessian subgroups $Hess_{36}$ or $Hess_{72}$ as follows:
            Both groups contain reflections but no four groups hence all reflections in the group will be conjugate to $[Z;Y;X]$
            (see Theorem $11$ in \cite{Mit}).
            Therefore, we can take $P\in PGL_3(K)$ such that $P^{-1}\,\sigma^2\,P=\lambda [Z;Y;X]$ and $Aut(C_{P})\subset PGL_3(K)$ is given by the
            usual presentation inside $PGL_3(K)$ of the above Hessian groups, in particular
            always $Aut(C_{P})$
            have the following five elements of $Hess_{36}$ and $Hess_{72}$: $[Z;Y;X]$, $[X;Z;Y]$, $[Y;X;Z]$, $[Y;Z;X]$ and $[X;\omega Y;\omega^2 Z]$,
             where $\omega$ is a primitive
            3rd root of unity.
Because $C_{P}$ is invariant through $[Z;Y;X]$, $[X;Z;Y]$, $[Y;X;Z]$
and $[Y;Z;X]$, then $C_{P}$ should be of the form:
$u(X^5+Y^5+Z^5)+a(X^4Z+X^4Y+Y^4X+Y^4Z+Z^4X+Z^4Y)+H(X;Y;Z)$ , where
$u,a\in K$ and $H(X,Y,Z)$ is a homogenous polynomial of degree 5
such that the degree of any of the variables is at most three. Now,
impose that $C_{P}$ is fixed by $[X,\omega Y,\omega^2 Z]$, we obtain
$u=0$ and $a=0$ a contradiction, because $C_{P}$ is non-singular.
Therefore there is no degree 5 curve with Hessian group $Hess_{36}$
or $Hess_{72}$.

Consequently, the claim follows and $Aut(C)$ should fix a line and a point off that line.
             \par Now, if $C$ admits a bigger non-cyclic automorphism group then it should be non-commutative by Harui and contain an element
            of order 2 which does not commute with $\sigma$. We can reduce to a subgroup which is conjugate to the dihedral
            group $D_8$, and moreover $Aut(C)$ fixes the point $[1:0:0]$ and the line $X=0$. That is, automorphisms of $C$ are of the form
            $[X;vY+wZ;sY+tZ]$. There is no element $\tau\in Aut(C)$ of order 2 such that $\tau\sigma\tau=\sigma^{-1}$. Hence $Aut(C)$ is not conjugate to $D_8$ or $S_4$. In particular, it is cyclic of order $4$. 
             \end{enumerate}\vspace*{-0.7cm}\[\qedhere\]
\end{proof}

Remains yet the study of curves $C$ where their automorphisms have
orders at most 3. In particular, $Aut(C)$ is not conjugate to $A_5,
Hess_{36}$ or $Hess_{72}$, because each of these groups contains
elements of order $>3$. Therefore, $Aut(C)$ should fix a line and a
point off that line or it is conjugate to a subgroup of $Aut(F_5)$
or $Aut(K_5)$.

\begin{prop} Let $C$ be a non-singular plane curve of type $3, (1,2)$ such that elements inside $Aut(C)$ have orders $\leq3$. Then, $C$ is $K$-isomorphic to $X^5+Y^4Z+YZ^4+\beta_{2,1}X^3YZ+X^2(\beta_{3,0}Z^3+\beta_{3,3}Y^3)+\beta_{4,2}XY^2Z^2=0$.
Moreover, if $\beta_{3,0}\neq\beta_{3,3}$ then $Aut(C)$ is cyclic of
order 3 and is conjugate to $S_3$ otherwise.
\end{prop}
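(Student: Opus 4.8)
The plan is to follow the same strategy used in the preceding propositions: we already know from the beginning of the section that when all automorphisms have order $\leq 3$, the group $Aut(C)$ either fixes a line and a point off it, or is conjugate to a subgroup of $Aut(F_5)$ or $Aut(K_5)$. So first I would normalize the curve. Starting from the type $3,(1,2)$ entry of Table~1, namely $X^5+Y^4Z+\alpha YZ^4+\beta_{2,1}X^3YZ+X^2(\beta_{3,0}Z^3+\beta_{3,3}Y^3)+\beta_{4,2}XY^2Z^2=0$, I would first argue that $\alpha\neq 0$ (otherwise the curve is singular or reducible — the partial derivatives vanish at a common point), and then rescale $Y,Z$ by a diagonal change of variables to set $\alpha=1$, arriving at the stated normal form $X^5+Y^4Z+YZ^4+\beta_{2,1}X^3YZ+X^2(\beta_{3,0}Z^3+\beta_{3,3}Y^3)+\beta_{4,2}XY^2Z^2=0$. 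The automorphism $\sigma=[X;\xi_3 Y;\xi_3^2 Z]$ (or a conjugate) of order $3$ is visibly present, so $Aut(C)$ is non-trivial.

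Next I would eliminate the Fermat/Klein possibilities. Since $|Aut(F_5)|=150=2\cdot 3\cdot 5^2$ and $|Aut(K_5)|=39=3\cdot 13$, a subgroup in which every element has order $\leq 3$ must have order dividing a power of $3$ times a power of $2$; for $F_5$ it is at most $6$ (looking at Remark~\ref{rem1}, the order-$\leq 3$ elements generate at most $S_3$), and for $K_5$ it is at most $3$. In each of these bounded cases, I would check directly whether a conjugate of the curve $C$ above can actually be a descendant of $F_5$ or $K_5$, i.e. whether its core (the degree-$5$ part with maximal exponent) can be transformed into $X^5+Y^5+Z^5$ or $XY^4+YZ^4+ZX^4$ — it cannot, because the core of $C$ is $Y^4Z+YZ^4$ (exponent $4$), which after no linear change becomes a sum of three independent $5$th powers or the Klein core. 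Hence $Aut(C)$ fixes a line $L$ and a point $Q\notin L$; concretely $Q=[1:0:0]$ and $L:X=0$, forced by the shape of $\sigma$. So every element of $Aut(C)$ has the block-diagonal form $[X;vY+wZ;sY+tZ]$, and we get a short exact sequence $1\to N\to Aut(C)\to G'\to 1$ with $|N|\mid 5$ — but $5\nmid|Aut(C)|$ here (no order-$5$ elements), so $N=1$ and $Aut(C)\cong G'\leq PGL_2(K)$ acting on $\{Y,Z\}$, with every element of order $\leq 3$; thus $G'$ is conjugate to $C_2$, $C_3$, $S_3$, or (ruled out by Lemma~\ref{noafive} since it contains $C_2\times C_2$) $A_4$ — so $G'\in\{C_3, S_3\}$ once we know $C_3\subseteq Aut(C)$ and note $C_2$ alone cannot contain our order-$3$ $\sigma$.

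The remaining work — and the genuine obstacle — is the case analysis distinguishing $C_3$ from $S_3$ in terms of the parameters. I would write down the general element $\tau=[X;vY+wZ;sY+tZ]$ of order $2$ normalizing $\langle\sigma\rangle$ (so that $\langle\sigma,\tau\rangle\cong S_3$), which forces $\tau\sigma\tau^{-1}=\sigma^{-1}$; computing, this means $\tau$ swaps the two eigenlines $Y$ and $Z$ of $\sigma$, i.e. $\tau=[X;\mu Z;\mu^{-1}Y]$ for some $\mu\in K^*$ (up to the $C_3$-ambiguity). Imposing $\tau^*F=F$ on the normal form: the terms $Y^4Z+YZ^4$ are preserved iff $\mu^5=1$; the term $X^3YZ$ is automatically preserved; the term $XY^2Z^2$ is automatically preserved; and $X^2(\beta_{3,0}Z^3+\beta_{3,3}Y^3)$ maps to $X^2(\beta_{3,0}\mu^{-3}Y^3+\beta_{3,3}\mu^3 Z^3)$, so invariance forces $\beta_{3,0}=\beta_{3,3}\mu^3$ and $\beta_{3,3}=\beta_{3,0}\mu^{-3}$, which together with $\mu^5=1$ gives $\mu^3=1$ hence $\mu=1$ and $\beta_{3,0}=\beta_{3,3}$ — conversely when $\beta_{3,0}=\beta_{3,3}$ the map $[X;Z;Y]$ works. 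Thus: if $\beta_{3,0}\neq\beta_{3,3}$ no such involution exists, so $G'=C_3$ and $Aut(C)$ is cyclic of order $3$; if $\beta_{3,0}=\beta_{3,3}$ then $[X;Z;Y]\in Aut(C)$, giving $\langle\sigma,[X;Z;Y]\rangle\cong S_3$, and since $G'$ cannot be larger (every element has order $\leq 3$ and $A_4$ is excluded by Lemma~\ref{noafive}), $Aut(C)\cong S_3$. I would also double-check non-singularity is preserved throughout (generic in the parameters) and that no further identifications among the parameters collapse the two cases. The most delicate point is making sure the normalization $\alpha=1$ doesn't secretly use up the freedom needed to realize $\mu\neq 1$, and that the core-based argument genuinely excludes the Fermat and Klein descendant cases without a tedious enumeration — handled cleanly by the exponent/core invariant since $Y^4Z+YZ^4$ is not projectively equivalent to either $X^5+Y^5+Z^5$ or the Klein core.
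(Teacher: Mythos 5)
Your overall architecture matches the paper's (reduce to the block form $[X;vY+wZ;sY+tZ]$, then hunt for involutions $[X;\mu Z;\mu^{-1}Y]$ normalizing $\langle\sigma\rangle$), and your final criterion $\beta_{3,0}=\beta_{3,3}$ is the right one. But two steps as written would fail. First, your exclusion of the Fermat/Klein descendant cases via the core is incorrect: the core of $X^5+Y^4Z+YZ^4+\cdots$ is $X^5$ (exponent $5$), not $Y^4Z+YZ^4$, and in any case being a descendant is defined only up to a change of coordinates, so inspecting one normal form settles nothing. In fact the Fermat quintic itself is of type $3,(1,2)$ once you diagonalize the order-$3$ cyclic permutation $[Y;Z;X]\in Aut(F_5)$, so descendants of $F_5$ genuinely occur among these curves and cannot be ruled out. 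The paper does not rule them out; it handles them head-on: for a Klein descendant, Sylow forces $|Aut(C)|\leq 3$ since $|Aut(K_5)|=3\cdot 13$; for a Fermat descendant, every subgroup of $Aut(F_5)$ with all elements of order $\leq 3$ is $C_3$ or conjugate to $S_3$, and the $S_3$ case again reduces to the same involution computation. Your argument happens to land on the right case only because the $S_3$ inside $Aut(F_5)$ also fixes a point and a line, but you have not shown this.

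Second, the key computation has an arithmetic slip that is then patched fallaciously. Under $\tau=[X;\mu Z;\mu^{-1}Y]$ one gets $Y^4Z+YZ^4\mapsto \mu^{-3}Y^4Z+\mu^{3}YZ^4$, so invariance of the quartic terms forces $\mu^{3}=1$ (as in the paper, where $\beta^3=1$), not $\mu^{5}=1$. Your two conditions $\beta_{3,0}=\mu^{3}\beta_{3,3}$ and $\beta_{3,3}=\mu^{-3}\beta_{3,0}$ are one and the same relation and do \emph{not} imply $\mu^{3}=1$; combined with your (incorrect) $\mu^{5}=1$ they would only say that an involution exists whenever $\beta_{3,0}/\beta_{3,3}$ is a fifth root of unity, which would contradict the proposition. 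The correct constraint $\mu^{3}=1$ coming from $Y^4Z+YZ^4$ is what forces $\beta_{3,0}=\beta_{3,3}$ (and note $\mu$ need not equal $1$: the three cube roots give the three involutions of $S_3$). Finally, a small omission: $\sigma$ fixes all three reference points, and $[0:1:0]$, $[0:0:1]$ lie on $C$; the paper disposes of these subcases by the fact that a group fixing a point on $C$ is cyclic, which you should include rather than asserting that $[1:0:0]$ is forced.
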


\begin{proof}
If $C$ is a descendant of the Klein curve then $Aut(C)$ is conjugate to a subgroup of $Aut(K_5)$. Hence can not be of order $>3$, since $|Aut(K_5)|=3\cdot13$ (otherwise; $Aut(C)$ should contain an element of order $13>3$ by Sylow's theorem). 
\par If $C$ is a descendant of the Fermat curve then $Aut(C)$ is cyclic of order $3$ or conjugate to $S_3$ inside $Aut(F_5)$. Indeed, $|Aut(F_5)|=2\cdot3\cdot5^2$ hence any subgroup of order $>3$ is conjugate to $S_3$ (note that $Aut(F_5)$ contains no elements of order $6$) or it contains elements of order $5>3$. Now, if $Aut(C)$ is conjugate to $S_3$ then there exists $\tau\in Aut(C)$ of order $2$ such that $\tau\sigma\tau=\sigma^{-1}$ which reduces $\tau$ to be of the form $[X;\beta Z;\beta^{-1}Y]$. But, $[X;\beta Z;\beta^{-1}Y]\in Aut(C)$ iff $\beta^3=1$ and $\beta_{3,0}=\beta_{3,3}.$
\par If $Aut(C)$ fixes a point then should be one of the reference points $P_1:=[1:0:0],\,\,P_2:=[0:1:0]$ or $P_3:=[0:0:1]$,
 since these are the only points which are fixed by $\sigma$. If the fixed points is $P_2$ or $P_3$ then $Aut(C)$ is cyclic of order $3$
 because both points lie on $C$. If the fixed point is $P_1$ then the line that is leaved invariant should be $X=0$, hence automorphisms of $C$ have the form $[X;\beta_2Y+\beta_3Z;\gamma_2Y+\gamma_3Z]$. Moreover, it follows by Theorem \ref{teoHarui}(2), Lemma \ref{noafive} and the assumption that there are no elements in $Aut(C)$ of order $>3$ that $Aut(C)$ satisfies a short exact sequence of the form $1\rightarrow N=1\rightarrow Aut(C)\rightarrow G'\rightarrow 1,$ where $G'$ is conjugate to $C_3, S_3$.
This completes the proof.
\end{proof}

\begin{prop} Let $C$ be a non-singular plane curve of type $2, (0,1)$ such that elements inside $Aut(C)$ have orders $\leq2$. Then, $C$ is $K$-isomorphic to $C:Z^4L_{1,Z}+Z^2L_{3,Z}+L_{5,Z}=0$ and $Aut(C)$ is cyclic of order 2.
\end{prop}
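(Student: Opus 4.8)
The plan is to follow the same pattern as the previous propositions in this section. Since every element of $Aut(C)$ has order $\leq 2$, the curve cannot be conjugate to $A_5$, $Hess_{36}$ or $Hess_{72}$ (these contain elements of order $>3$), nor to a subgroup of $Aut(F_5)$ or $Aut(K_5)$ of order exceeding $2$: indeed $|Aut(F_5)|=2\cdot3\cdot5^2$ and $|Aut(K_5)|=3\cdot13$, so any subgroup of order $>2$ would contain an element of order $3$, $5$ or $13$. Hence by Harui's theorem (Theorem \ref{teoHarui}) either $Aut(C)$ fixes a point off $C$ and we are in case (2) of that theorem, or $|Aut(C)|\leq 2$ and we are done immediately.

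So I would reduce to the case where $Aut(C)$ fits in a short exact sequence $1\rightarrow N\rightarrow Aut(C)\rightarrow G'\rightarrow 1$ with $N$ cyclic of order dividing $d=5$, hence $|N|\in\{1,5\}$; since $Aut(C)$ has no element of order $5$ we get $N=1$, so $Aut(C)\cong G'\leq PGL_2(K)$ is a $2$-group of exponent $2$. By Lemma \ref{noafive} there is no non-singular plane quintic with $C_2\times C_2\preceq Aut(C)$, so $G'$ cannot contain a copy of $C_2\times C_2$; the only exponent-$2$ subgroups of $PGL_2(K)$ are then the trivial group and $C_2$. Therefore $Aut(C)$ is cyclic of order $1$ or $2$.

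It remains to rule out $Aut(C)$ being trivial, i.e.\ to show that a generic member of type $2,(0,1)$ genuinely has an order-$2$ automorphism — but this is built in: the type $2,(0,1)$ normal form is precisely the locus of quintics invariant under $\sigma:(x:y:z)\mapsto(x:y:-z)$, so $\sigma\in Aut(C)$ by construction and $Aut(C)$ has order exactly $2$. Finally I would note that $C: Z^4 L_{1,Z} + Z^2 L_{3,Z} + L_{5,Z} = 0$ is already the reduced normal form from the table in \S2 (with $\alpha$ normalized to $1$ where applicable), and that the only further constraints needed are those ensuring $C$ is non-singular; no additional constraint on the $L_{i,Z}$ is forced by the automorphism count, since any larger $Aut(C)$ would have been caught by the earlier propositions (types with maximal order $\geq 3$) or by the order-divisibility restrictions of Proposition on cyclic orders.

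The main obstacle — really the only non-formal point — is the verification that $N$ must be trivial and that $G'$ cannot be $C_2\times C_2$ or larger; but both are immediate from Lemma \ref{noafive} together with the absence of elements of order $>2$, so the proof is genuinely short and parallels the end-game of Proposition \ref{type 5}. One should double-check that no sporadic exponent-$2$ configuration inside $PBD(2,1)$ was overlooked, but since $N=1$ the extension splits trivially and $Aut(C)\hookrightarrow PGL_2(K)$, where the classification of finite subgroups leaves no room for an exponent-$2$ group other than $1$ and $C_2$.
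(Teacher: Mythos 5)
Your proposal is correct and follows essentially the same route as the paper: rule out descendants of $F_5$ and $K_5$ beyond order $2$ by divisibility of $|Aut(F_5)|=2\cdot 3\cdot 5^2$ and $|Aut(K_5)|=39$, then in the fixed line-and-point case use Harui's exact sequence with $N=1$ and invoke Lemma \ref{noafive} to exclude $C_2\times C_2$. Your extra remarks (that $N$ is trivial because $5\nmid |Aut(C)|$, and that $Aut(C)$ is nontrivial because the involution is built into the normal form) only make explicit what the paper leaves implicit.
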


\begin{proof}
$C$ is not a descendant of the Klein curve because $2\nmid
|Aut(K_5)|(=39)$. Also, if $C$ is a descendant of the Fermat curve
then $Aut(C)$ can not be conjugate to a bigger subgroup of
$Aut(F_5)$, because $|Aut(F_5)|=2.3.5^2$, thus subgroups of order
$>2$ should contain an element of order $3$ or $5$ which is a
contradiction. Finally, if $Aut(C)$ fixes a line and a point off
that line then, by \cite{Harui} and our assumption that there are no
automorphisms of order $>2$, we get that
 $Aut(C)$ satisfies a short exact sequence of the form  $1\rightarrow N=1\rightarrow Aut(C)\rightarrow G'\rightarrow 1,$  where $G'$ contain an
  element of order $2$ and no higher orders. Thus $Aut(C)$ should be conjugate to $C_2$ or $C_2\times C_2$.
  Consequently, the result follows by Lemma \ref{noafive}.
\end{proof}
Lastly, we need to ensure the existence of a non-singular plane
curve $C$, via certain specializations of the parameters, for which
the maximal order of the elements in its full automorphism group is
exactly $m$ where $m\leq 5$.

This is a tedious computation because we do not know a priori the
dimension of the locus $\rho(\widetilde{M_6(G)})$, see for example
the situations with $m=4$ in \cite{BaBacyc1}. To treat the case
$m\neq 4$, we can apply similar arguments as the situation $m=4$,
which will not be reproduced here (nevertheless, we know all the
possible groups and the representations that could appear such that
$m$ divides their order). This in turns simplifies the computations,
in order to conclude,

\begin{lem} Take $F(X;Y;Z)$ as the equation of degree 5 associated to Type $m,(a,b)$ where $m\leq 5$ in \S2, table \ref{table:Cyclic Auto.}
with $m, (a,b)\neq 4, (1,3)$.
 Then, there exists
a non-singular plane curve $C$ obtained by a concrete specialization
of the parameters of the equation $F(X;Y;Z)$, such that all the
elements of $Aut(C)$ are of order $\leq m$. Moreover, for type
$3, (1,2)$, we have curves with this property on the elements of
$Aut(C)$ of order $\leq 3$ satisfying $\beta_{3,0}\neq \beta_{3,3}$
and also satisfying $\beta_{3,0}=\beta_{3,3}$.
\end{lem}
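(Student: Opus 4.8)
The plan is to treat the six relevant types --- $5,(1,2)$, $5,(0,1)$, $4,(1,2)$, $4,(0,1)$, $3,(1,2)$ and $2,(0,1)$ --- one at a time, and in each case to exhibit a concrete numerical choice of the free parameters $\alpha,\beta_{i,j}$ (and of the coefficients of the generic forms $L_{1,*},L_{3,*},L_{5,*}$) giving a polynomial $F$ with two properties: (i) $F=0$ is smooth, and (ii) $Aut(C)$ has no element of order $>m$. For (i) one substitutes the chosen values, computes $F_X,F_Y,F_Z$, and checks --- through a resultant or a Gr\"obner basis --- that they have no common zero in $\mathbb{P}^2(K)$; equivalently one produces a nonzero rational value of the discriminant of the quintic. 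For instance, for type $5,(1,2)$ the curve $C:\,X^5+Y^5+Z^5+X^2YZ^2=0$ (so $\beta_{3,1}=1$, $\beta_{4,3}=0$) is readily checked to be smooth, and here (ii) is automatic: the case $5,(1,2)$ in the proof of Proposition~\ref{type 5} never uses the hypothesis on the maximal order and shows that \emph{every} smooth quintic of this type has $Aut(C)\cong D_{10}$, all of whose elements have order $\le5$. For the remaining types the governing proposition of \S4 is only proved under the assumption that the maximal order is $\le m$, so (ii) must still be established directly.

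To establish (ii), recall that by the divisibility result of \S3 every $\sigma\in Aut(C)$ has order in $\{1,2,3,4,5,8,10,13,15,16,20\}$; hence an element of order $>m$ (with $m\le5$) would have order in $\{8,10,13,15,16,20\}$, and then by Theorem~\ref{BB} together with Table~\ref{table:Cyclic Auto.} the curve $C$ would be $K$-isomorphic to one of the six curves $X^5+Y^5+XZ^4$, $X^5+Y^4Z+XZ^4$, $X^5+Y^4Z+YZ^4$, $K_5$, $X^5+Y^5+\alpha XZ^4+\beta_{2,0}X^3Z^2$ or $X^5+Y^4Z+\alpha XZ^4+\beta_{2,0}X^3Z^2$. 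So it suffices to check that the chosen specialization is \emph{not} projectively equivalent to any of these six curves. This is done by an elementary $PGL_3(K)$-invariant computation: each exceptional curve carries a homology of order $\ge8$, or a non-cyclic abelian subgroup such as $C_5\times C_5$ inside its automorphism group, or an inner/outer Galois point of order $>m$ in the sense of \S4, none of which persists for a sufficiently general member of the family of type $m,(a,b)$; equivalently, the condition that $C$ be $K$-isomorphic to one of these six curves cuts out a proper Zariski-closed subset of the affine parameter space of the type, and one only needs the chosen point to avoid the finite union of these subsets together with the discriminant hypersurface. Once (i) and (ii) hold, the proposition of \S4 governing the type at hand applies to $C$ and identifies $Aut(C)$ as, respectively, $D_{10}$, $\Z/5\Z$, $\Z/4\Z$ (for both types $4,(1,2)$ and $4,(0,1)$), $\Z/3\Z$ or $S_3$, and $\Z/2\Z$ --- in every case a group all of whose elements have order $\le m$.

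For the extra assertion on type $3,(1,2)$ one produces two specializations: one with $\beta_{3,0}\neq\beta_{3,3}$ and one with $\beta_{3,0}=\beta_{3,3}$, verifying (i) and (ii) as above in both. Since the locus removed in (ii) is a \emph{proper} closed subset of the parameter space, it contains neither the hyperplane $\{\beta_{3,0}=\beta_{3,3}\}$ nor its complement, so a valid choice exists on each side; on the side $\beta_{3,0}=\beta_{3,3}$ the relevant proposition of \S4 then forces $Aut(C)\cong S_3$, while on the side $\beta_{3,0}\neq\beta_{3,3}$ it forces $Aut(C)\cong\Z/3\Z$.

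The step I expect to be the main obstacle is exactly (ii), i.e. certifying for the concrete curve chosen that no $P\in PGL_3(K)$ conjugates a putative diagonal automorphism of order $8,10,13,15,16$ or $20$ into $Aut(C)$. This is precisely the tedious computation referred to just before the statement; it becomes tractable only because \S2--\S4 have already produced the complete finite list of the groups $G$ --- and of their injective representations $\rho:G\hookrightarrow PGL_3(K)$ --- whose order is divisible by an integer $>m$, so that in each type only a handful of small polynomial systems in $\alpha,\beta_{i,j}$ have to be shown to be inconsistent outside the prescribed loci, exactly along the lines carried out for $m=4$ in \cite{BaBacyc1}.
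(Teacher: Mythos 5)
The paper itself offers no proof of this lemma: it is stated immediately after a remark that the verification is a ``tedious computation'' whose details are deferred to \cite{BaBacyc1}, so your proposal must be judged on its own terms. Its overall shape --- pick concrete specializations, check smoothness, rule out extra automorphisms using the classification of \S2--\S4 --- is clearly the intended one. But there is a genuine gap in your step (ii). From the divisibility result the possible orders are $\{1,2,3,4,5,8,10,13,15,16,20\}$, and you assert that an element of order $>m$ must have order in $\{8,10,13,15,16,20\}$. That is true only for $m=5$. For type $2,(0,1)$ you must also exclude automorphisms of order $3$, $4$ and $5$; for type $3,(1,2)$, orders $4$ and $5$; for the two types with $m=4$, order $5$. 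None of these intermediate orders forces $C$ to be $K$-isomorphic to one of your six exceptional curves, and Theorem \ref{BB} says nothing about them (apart from homologies of order $d-1$), so the mechanism you propose --- ``check that $C$ is not projectively equivalent to one of six explicit curves'' --- simply does not reach them. Your closing paragraph repeats the same restriction (``a putative diagonal automorphism of order $8,10,13,15,16$ or $20$''), so the omission is systematic rather than a slip.

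Relatedly, for these intermediate orders the exceptional locus is no longer a finite union of points and one-parameter families: the loci $\rho(M_6^{Pl}(\Z/k\Z))$ for $k=3,4,5$ have positive dimension inside the parameter space of each type, and the claim that the subset of curves of type $m,(a,b)$ admitting an extra automorphism of order $k\in(m,5]$ is a \emph{proper} closed subset is precisely the point that requires the dimension counts of \cite{BaBacyc1} (this is what the paper means by ``we do not know a priori the dimension of the locus''). Properness cannot be taken for granted: for type $5,(1,2)$ \emph{every} smooth member acquires the extra involution $[Z;Y;X]$, so an analogous coincidence must be excluded, not assumed, for each pair (type, intermediate order). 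To close the gap you should, for each type with $m\le 4$ and each $k\in\{3,4,5\}$ with $k>m$, either carry out the explicit finite check that no $PGL_3(K)$-conjugate of $\mathrm{diag}(1,\xi_k^a,\xi_k^b)$ preserves your chosen specialization, or compare $\dim\rho(M_6^{Pl}(\Z/m\Z))$ with the dimension of its intersection with $M_6^{Pl}(\Z/k\Z)$.
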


Lastly, we remark the following:
\par In the next table, we list the exact groups $G$
(some of them are given in the GAP notations) that appear as an
$Aut(\delta)$ of a non-singular plane curve $\delta$ of genus 6. The
second column corresponds to the injective representation $\rho$ of
the group $G$ inside $PGL_3(K)$. The third column corresponds to a
homogenous equation of degree 5 associated with certain parameters
such that for any $\delta\in \rho(M_6^{Pl}(G))$, a plane
non-singular model associated to $\delta$ can be obtained by a
specialization of the parameters and vice versa.
\par It remains to assign to each equation
the parameters' restrictions to ensure that the equation is
geometrically irreducible, non-singular and it does not have a
bigger automorphism group (so that, any
$\delta\in\widetilde{\rho(M_6^{Pl}(G))}$ corresponds to some
specialization of the parameters with respect to the restrictions,
and conversely, any specialization of the parameters with respect
the restrictions gives a non-singular plane model of some element in
$\widetilde{\rho(M_6^{Pl}(G))}$). We recall that, always $\alpha(\neq
0)$ can be transformed by a diagonal change of variables to 1.

\begin{center}
\begin{table}[!th]
  \renewcommand{\arraystretch}{1.3}
  \caption{Full Automorphism of quintics}\label{table:FullAuto.}
  \centering
\begin{tabular}{|c|c|c|}
  \hline
  $G$ &$\rho(G)$& $F_{\rho(G)}(X;Y;Z)$  \\\hline\hline
  $(150,5)$& $[\xi_{5}X;Y;Z],[X;\xi_{5}Y;Z]$ & $X^5+Y^5+Z^5$  \\
           & $[X;Z;Y],\,[Y;Z;X]$   & \\\hline
  $(39,1)$& $[X;\xi_{13} Y;\xi_{13}^{10} Z],[Y;Z;X]$& $X^4Y+Y^4Z+Z^4X$   \\\hline
  $(30,1)$ & $[X;\xi_{15} Y;\xi_{15}^{11}Z],[X;Z;Y]$ & $X^5+Y^4Z+YZ^4$
  \\\hline
    $\Z/{20}\Z$& $[X;\xi_{20}^4 Y;\xi_{20}^5 Z]$ & $X^5+Y^5+XZ^4$    \\\hline
  $\Z/{16}\Z$& $[X;\xi_{16} Y;\xi_{12}^5 Z]$& $X^5+Y^4Z+XZ^4$    \\\hline
$\Z/{10}\Z$& $[X;\xi_{10}^2Y;\xi_{10}^5Z]$& $X^5+Y^5+ XZ^4+\beta_{2,0}X^3Z^2$\\
& &$\beta_{2,0}\neq0$ and $\beta_{2,0}^2\neq20$
\\\hline
$D_{10}$& $[X;\xi_5 Y;\xi_5^2 Z],\, [Z,Y,X]$ & $X^5+Y^5+Z^5+\beta_{3,1}X^2YZ^2+\beta_{4,3}XY^3Z$   \\
& &$(\beta_{3,1},\beta_{4,3})\neq(0,0)$   \\\hline
$\Z/8\Z$&$[X;\xi_{8} Y;\xi_{8}^{4} Z]$& $X^5+Y^4Z+
XZ^4+\beta_{2,0}X^3Z^2$\,\,\,$(\beta_{2,0}\neq0,\pm 2)$
\\\hline
 $S_3$& $[X;\xi_3Y;\xi_3^2Z]$& $X^5+Y^4Z+YZ^4+\beta_{2,1}X^3YZ+\beta_{3,3} X^2\big(Z^3+Y^3\big)+$  \\
 & $[X;Z;Y]$  &$+\beta_{4,2}XY^2Z^2$ (not above) \\\hline

$\Z/5\Z$& $[X;Y;\xi_5Z]$& $Z^5+L_{5,Z}$ (not above)  \\\hline
$\Z/4\Z$&  $[X;\xi_4Y;\xi_4^2Z]$& $X^5+X\big(Z^4+\alpha Y^4\big)+\beta_{2,0}X^3Z^2+\beta_{3,2}X^2Y^2Z+\beta_{5,2}Y^2Z^3$    \\
& &$(\beta_{5,2}\neq0)$\,  (not above)    \\\hline $\Z/4\Z$&
$[X;Y;\xi_4Z]$& $Z^4L_{1,Z}+L_{5,Z}$\,\,(not above)      \\\hline
 $\Z/3\Z$& $[X;\xi_3Y;\xi_3^2Z]$& $X^5+Y^4Z+\alpha YZ^4+\beta_{2,1}X^3YZ+$  \\
&&
$+X^2\big(\beta_{3,0}Z^3+\beta_{3,3}Y^3\big)+\beta_{4,2}XY^2Z^2$\,\,
(not above)   \\\hline
 $\Z/2\Z$&  $[X;Y;\xi_2Z]$& $Z^4L_{1,Z}+Z^2L_{3,Z}+L_{5,Z}$\,\,(not above)     \\\hline

 \end{tabular}
\end{table}

\end{center}

\

\begin{rem}
It should be noted that it appears here a new phenomena that did not
occur for degree $d=4$. Henn in \cite{He} observed that for each
finite group $G$ that appeared as an automorphism group of a
non-singular plane curve $\delta\in M_3^{Pl}(G)$, there is an unique
equation $F_G(X,Y,Z)$ (endowed with a set of restrictions on the
parameters), up to change of variables, where the specializations of
such equation is a plane non-singular model for the elements of
$M_3^{Pl}(G)$ and vice versa. This is not the case any more for
degree $5$, since from the above table, with the group $\Z/4\Z$, we
obtain two $\rho$'s where their equations $F_{\rho}(X;Y;Z)$ are not
$K$-isomorphic, and corresponds to the disjoint decomposition of
$\widetilde{M_6^{Pl}(\Z/4\Z)}$ in terms of non-empty
$\rho(\widetilde{M_6^{Pl}(\Z/4\Z)})$. Similar situations happen for
higher degrees, for more details, we refer to \cite{BaBacyc1}.
\end{rem}

\end{document}